\documentclass[10pt]{amsart}
\usepackage{amssymb,amstext,amsmath,amscd,amsthm,amsfonts,comment,enumerate,latexsym,stmaryrd,multicol,geometry,graphicx,mathrsfs,bm}
\usepackage[usenames]{color}
\usepackage[all]{xy}
\newtheorem{thm}{Theorem}[section]
\newtheorem{lem}[thm]{Lemma}
\newtheorem{prop}[thm]{Proposition}
\newtheorem{cor}[thm]{Corollary}
\theoremstyle{definition}
\newtheorem{dfn}[thm]{Definition}

\newtheorem{rem}[thm]{Remark}

\newtheorem{ex}[thm]{Example}

\theoremstyle{remark}

\newtheorem*{claim*}{Claim}
\newtheorem*{ac}{Acknowledgments}
\newtheorem*{conv}{Convention}

\numberwithin{equation}{thm}

\def\add{\operatorname{\mathsf{add}}}

\def\cm{\mathsf{CM}}

\def\core{\operatorname{\mathsf{core}}}

\def\db{\operatorname{\mathsf{D^b}}}
\def\depth{\operatorname{depth}}
\def\dim{\operatorname{dim}}
\def\ds{\operatorname{\mathsf{D_{sg}}}}

\def\Ext{\operatorname{Ext}}

\def\G{\mathcal{G}}
\def\ge{\geqslant}
\def\H{\mathrm{H}}
\def\hom{\operatorname{Hom}}

\def\id{\operatorname{id}}
\def\inf{\operatorname{inf}}

\def\ker{\operatorname{Ker}}
\def\le{\leqslant}

\def\m{\mathfrak{m}}
\def\max{\operatorname{max}}

\def\mod{\operatorname{\mathsf{mod}}}

\def\p{\mathfrak{p}}
\def\pd{\operatorname{pd}}

\def\res{\operatorname{\mathsf{res}}}

\def\sup{\operatorname{sup}}
\def\syz{\Omega}
\def\T{\mathcal{T}}

\def\thick{\operatorname{\mathsf{thick}}}

\def\X{\mathcal{X}}
\def\Y{\mathcal{Y}}

\def\ZZ{\mathbb{Z}}
\begin{document}
\title{Contravariantly finite resolving subcategories over commutative local rings are trivial ones} 
\author{Yuki Mifune}
\address[Mifune]{Graduate School of Mathematics, Nagoya University, Furocho, Chikusaku, Nagoya 464-8602, Japan}
\email{yuki.mifune.c9@math.nagoya-u.ac.jp}
\author{Gen Tanigawa}
\address[Tanigawa]{Graduate School of Mathematics, Nagoya University, Furocho, Chikusaku, Nagoya 464-8602, Japan}
\email{tanigawa.gen.y4@s.mail.nagoya-u.ac.jp}
\thanks{2020 {\em Mathematics Subject Classification.} 13C60, 13C14, 13C05}
\thanks{{\em Key words and phrases.} contravariantly finite resolving subcategory, minimal right approximation, finite injective dimension, dominant ring}
\begin{abstract}
We show that a contravariantly finite resolving subcategory over a henselian local ring must be the category of free modules or the whole module category or the subcategory of maximal Cohen--Macaulay modules. This removes the Gorenstein assumption from a theorem of Takahashi. We also show that a resolving subcategory of finite type other than the subcategory of free modules must be the subcategory of maximal Cohen--Macaulay modules. As a consequence, every Cohen--Macaulay local ring of finite CM type is uniformly dominant, thereby establishing a stronger form of a conjecture of Takahashi.
\end{abstract}
\maketitle
\section{Introduction}
Contravariantly finite subcategories were introduced by Auslander--Smal{\o} \cite{AS1980,AS1981} in connection with their study of preprojective modules and almost split sequences in subcategories.
Resolving subcategories were introduced by Auslander--Bridger \cite{AB1969} in their development of stable module theory and Gorenstein dimension.
Contravariantly finite resolving subcategories are closely related to tilting theory. 
For an artin algebra $\Lambda$ of finite global dimension, a fundamental result of Auslander--Reiten \cite{AR1991} gives a one-to-one correspondence between basic cotilting modules and contravariantly finite resolving subcategories of $\mod\Lambda$.

Let $R$ be a commutative noetherian local ring.
Denote by $\mod R$ the category of finitely generated $R$-modules, and by $\cm(R)$ the full subcategory of $\mod R$ consisting of maximal Cohen--Macaulay $R$-modules.
The Auslander--Buchweitz Cohen--Macaulay approximation theory \cite{AB1989} can be regarded as a commutative-algebraic aspect of tilting theory. 
In this context, Takahashi \cite{T2011} studied contravariantly finite resolving subcategories over commutative noetherian local rings and obtained the following result.
\begin{thm}[Takahashi]\label{int_T2011}
Let $R$ be a henselian Gorenstein local ring, and $\X$ a contravariantly finite resolving subcategory of $\mod R$. Then $\X=\add R$, or $\X=\cm(R)$, or $\X=\mod R$.
\end{thm}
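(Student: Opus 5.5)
Let $\X$ be a contravariantly finite resolving subcategory of $\mod R$, with $R$ henselian Gorenstein of dimension $d$. I would split into two cases according to whether $\X$ contains a module of infinite projective dimension.

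\emph{Case 1: every module in $\X$ has finite projective dimension.} Take a minimal right $\X$-approximation $f\colon X\to k$ of the residue field; minimal approximations exist because $R$ is henselian, so $\mod R$ is Krull--Schmidt. By Wakamatsu's lemma, $Y=\ker f$ satisfies $\Ext^1_R(\X,Y)=0$. Since $\X$ is closed under syzygies, $\Ext^i_R(\X,Y)=0$ for all $i\ge1$. If $f$ were not surjective, then $f=0$ (as $k$ is simple), which forces $k\in\X$ only when $k$ is free, so we may assume $f$ is surjective. If $\X\ne\add R$, choose a nonfree $M\in\X$ of minimal positive projective dimension $n$. Since $\Ext^n_R(M,R)\ne0$ and $\Ext^{>0}(\X,Y)=0$, I would compare $Y$ with $R$. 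Using the exact sequence $0\to Y\to X\to k\to0$ and the long exact sequence of $\Ext_R(M,-)$, I get $\Ext^n_R(M,X)\cong\Ext^n_R(M,k)\ne0$ when $n\ge1$. On the other hand, $X$ has finite projective dimension, and a careful choice of $M$ (for instance a syzygy of $X$) should give a contradiction. A cleaner route: since every module in $\X$ has finite projective dimension, the Auslander--Buchweitz theory shows $\X$ is determined by a finite set of Ext conditions, and contravariant finiteness forces $\X=\add R$ unless $k\in\X$, in which case $\X=\mod R$ because $\X$ is resolving and closed under extensions (every module is an iterated extension of copies of $k$). I expect the real work is showing $k\in\X$ whenever $\X$ contains a nonfree module of finite projective dimension. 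For this I would argue with the minimal approximation of $k$: its kernel has the Ext-vanishing property against $\X$, and the depth lemma / Auslander--Buchweitz formula forces that kernel to have finite injective dimension, hence finite projective dimension since $R$ is Gorenstein. Iterating the argument then puts $k$ into $\X$.

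\emph{Case 2: $\X$ contains a module $M$ of infinite projective dimension.} Since $R$ is Gorenstein, $\syz^d M$ is a nonfree maximal Cohen--Macaulay module in $\X$ with a complete resolution. Moreover $\X\cap\cm(R)$ is contravariantly finite in $\cm(R)$, because $\cm(R)$ is itself contravariantly finite via CM approximations. I would show $\X\supseteq\cm(R)$ as follows. For any $N\in\cm(R)$, take a minimal right $\X$-approximation $X\to N$ with kernel $Y$, so that $\Ext^{>0}(\X,Y)=0$; in particular $\Ext^{i}(\syz^dM,Y)=0$ for all $i>0$. Passing to the stable category of $\cm(R)$, which is triangulated, and using the fact that the set of modules right-orthogonal to a thick-generating object is trivial over a Gorenstein ring, I would aim to show $Y$ has finite injective dimension, hence is free modulo CM-approximation. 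This would make the approximation split and give $N\in\X$. This step is the main obstacle. I would first try to reduce it to showing that the thick subcategory of the singularity category generated by $\syz^dM$ meets every nonzero object, for example using that the right orthogonal of $\X$ in $\ds(R)$ must be closed under shifts.

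Finally, once $\cm(R)\subseteq\X$: if every module of $\X$ is maximal Cohen--Macaulay, then $\X=\cm(R)$. Otherwise $\X$ contains a module of finite positive projective dimension, and the Case 1 argument applied to $\X$ yields $k\in\X$, so $\X=\mod R$.
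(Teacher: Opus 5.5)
There is a genuine gap, and it is the step you yourself call the main obstacle. Your case split and your use of minimal approximations with Wakamatsu's lemma match the paper. In Case 2, however, you never show that modules in $\X^\perp$ (in particular the kernel $Y$ of your approximation $X\to N$) have finite injective dimension, and this is the whole content of the theorem.

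The route you sketch does not work, for two reasons. First, $\X$ is closed under syzygies but not cosyzygies, so $\Ext^{>0}_R(\X,Y)=0$ only gives vanishing of $\underline{\hom}_R(\syz^dM,-)$ against \emph{positive} shifts of $Y$ (via its maximal Cohen--Macaulay approximation) in $\underline{\cm}(R)$. That is not orthogonality to a thick subcategory. Second, $\syz^dM$ need not generate $\ds(R)$: if $R$ has a non-isolated singularity, $\syz^dk$ is locally free on the punctured spectrum, and its thick closure misses $R/\p$ for any singular prime $\p\ne\m$.

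The missing idea is to use the approximation of $k$, not of $N$, to control all of $\X^\perp$. Take $0\to Y\to X\to k\to0$ with $Y\in\X^\perp$, a module $G\in\X$ with $\pd G=\infty$, and a minimal free resolution $P$ of $G$, and apply $\hom_R(P,-)$. Because $\hom_R(P,k)$ has zero differentials, each map $\mathbf{Y}^i\to Z^{i+1}\mathbf{Y}$ factors through $\mathbf{X}^i\cong X^{\oplus r_i}$; this map is surjective since $\Ext^{>0}_R(G,Y)=0$. So for any $B$ with $\Ext^{\gg0}_R(X,B)=0$ you get surjections $D_i\to D_{i+1}$ of finitely generated modules with kernels $\Ext^i_R(Y,B)^{\oplus r_i}$. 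Noetherianity forces these surjections to be eventually bijective. Hence $\Ext^{\gg0}_R(Y,B)=0$, then $\Ext^{\gg0}_R(k,B)=0$, so $\id B<\infty$ (Lemma \ref{keylem}, Theorem \ref{inj}). With this your splitting argument for $N\in\cm(R)$ goes through. Moreover, the formula $\depth R-\depth X'=\sup\{i\mid\Ext^i_R(X',Y)\neq0\}=0$, applied to the approximation of $k$, gives $\X\subseteq\cm(R)$ whenever $k\notin\X$.

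Case 1 is not actually argued either. The appeals to Auslander--Buchweitz theory and to ``iterating the argument'' do not constitute a proof, and one claim there is false. The missing fact is elementary: if $0<n=\pd X'<\infty$ and $Y\neq0$, then $\Ext^n_R(X',Y)\neq0$, by Nakayama applied to the last differential of a minimal free resolution of $X'$. Apply this to the kernel $Y\in\X^\perp$ of the minimal approximation of $k$, which is nonzero exactly when $k\notin\X$. It shows that every module of finite projective dimension in $\X$ is free. This does not assume that \emph{all} of $\X$ has finite projective dimension, which your last paragraph tacitly needs when it reapplies Case 1 to some $\X\supseteq\cm(R)$.

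Finally, for a merely resolving $\X$, $k\in\X$ does not imply $\X=\mod R$. Only finite-length modules are iterated extensions of $k$. Over a regular local ring of dimension at least two, the modules locally free on the punctured spectrum form a proper resolving subcategory containing $k$. The correct argument uses contravariant finiteness. If $k\in\X$ and $0\to Y\to X\to M\to0$ is a minimal right $\X$-approximation, then $\Ext^{>0}_R(k,Y)=0$ forces $Y$ to be injective. Hence the sequence splits and $M\in\X$.
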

\noindent
Here, $\add R$ stands for the subcategory of $\mod R$ consisting of free modules.
Theorem \ref{int_T2011} indicates that, even without the assumption of finite global dimension, contravariantly finite resolving subcategories are highly restricted in the commutative local setting.
The main result of this paper is the following.
\begin{thm}[Corollary \ref{mainthm}]\label{int_thm1}
Let $R$ be a henselian local ring and $\X$ a contravariantly finite resolving subcategory of $\mod R$. 
Then $\X=\add R$, or $\X=\mod R$, or $R$ is Cohen--Macaulay and $\X=\cm(R)$. 
\end{thm}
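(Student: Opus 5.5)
We would argue via minimal right $\X$-approximations of the residue field $k$, in the spirit of Takahashi's proof of Theorem \ref{int_T2011}, but replacing every use of the Gorenstein hypothesis by finite-injective-dimension arguments. Since $R$ is henselian, $\mod R$ is Krull--Schmidt, so every module has a minimal right $\X$-approximation. Fix one, $\pi\colon X\to k$, and set $Y=\ker\pi$. Because $\X$ is resolving, it contains $R$, and a standard Wakamatsu-type argument gives $\Ext^1_R(\X,Y)=0$. The sequence $0\to Y\to X\to k\to 0$ is then the main tool.

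The first step is to dispose of the easy cases. If $\pi=0$ (that is, $X=0$ under minimality), then $k$ admits no nonzero map from $\X$. This is impossible because $R\in\X$, so $\pi$ is surjective. If $Y=0$, then $k\in\X$. Since $\X$ is closed under extensions and contains $k$, it contains every module of finite length. Closure under syzygies, and more importantly under kernels of epimorphisms, then lets us induct on dimension: for any $M$, take a general sequence in $\mathfrak m$ and use $\Omega$ and extensions to conclude $\X=\mod R$. Here I would appeal to the standard fact that a resolving subcategory containing $k$ is $\mod R$.

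So assume $Y\neq0$. The aim is to show that either $\X=\add R$, or $R$ is Cohen--Macaulay and $\X=\cm(R)$.

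The second step is to show that $Y$ has finite injective dimension. Since $\Ext^1(\X,Y)=0$ and $\X$ is closed under syzygies, dimension shifting gives $\Ext^i(\X,Y)=0$ for all $i\ge1$. I then want $\Ext^i(k,Y)=0$ for $i\gg0$. Applying $\hom(-,Y)$ to $0\to Y\to X\to k\to 0$ gives
\[
\Ext^{i}(X,Y)\to\Ext^i(Y,Y)\to\Ext^{i+1}(k,Y)\to\Ext^{i+1}(X,Y)=0.
\]
This shows that $\Ext^{i+1}(k,Y)\cong\Ext^i(Y,Y)$ for $i\ge1$. Repeating the argument for syzygies, or comparing with minimal approximations of $\Omega^n k$, should yield a decreasing chain forcing vanishing in high degree. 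This is the step I expect to be the main obstacle: without Gorensteinness there is no Auslander--Buchweitz theory available. My first attempt would be to use minimality of $\pi$ to show that $Y$ has no free summand mapping through $\mathfrak m$-trivially. Then I would show that $\Ext^{\gg0}(Y,Y)=0$ together with the self-orthogonality $\Ext^{>0}(\X\ni Y?,Y)$. This requires first proving $Y\in\X$, which should follow from $\X$ being closed under kernels of epimorphisms if $k\in\X$. That is not available, so instead I would use the depth lemma on $Y$ and the Bass/Ischebeck formula $\Ext^i(M,Y)=0$ for $i>\depth R-\depth M$ once $\id Y<\infty$ is established.

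Once a nonzero $Y$ of finite injective dimension exists, the Bass conjecture (Peskine--Szpiro, Roberts) forces $R$ to be Cohen--Macaulay. Ischebeck's formula then gives $\Ext^{>0}(\cm(R),Y)=0$, and the orthogonality $\Ext^{>0}(\X,Y)=0$ combined with the sequence forces every $M\in\X$ to be maximal Cohen--Macaulay. Indeed, if $\depth M<d$, then pairing with $Y$ yields a nonvanishing Ext at degree $d-\depth M$ (by Ischebeck's equality for finite injective dimension modules). So $\X\subseteq\cm(R)$.

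The last step is to split into two cases. If $\X$ contains a nonfree module, we use the approximation of $\Omega^d k$, which lies in $\cm(R)$, and argue that $\X\supseteq\cm(R)$. Otherwise $\X=\add R$. For the first case I would show that the minimal $\X$-approximation of $\Omega^dk$ is an isomorphism: its kernel is orthogonal to $\X$ and lies in $\cm(R)$, so it has finite injective dimension and is a direct sum of copies of a dualizing module. Minimality and the existence of a nonfree module in $\X$ should then force the kernel to vanish, giving $\Omega^dk\in\X$. Since every module in $\cm(R)$ is a direct summand of an extension built from syzygies of $k$, we conclude $\X=\cm(R)$.
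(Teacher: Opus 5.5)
Your proposal has a genuine gap at its central step, plus two further steps that fail as stated.

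\textbf{The central gap: finite injective dimension of $Y$.} Your Step 2 is never carried out. The sketch ends by invoking Ischebeck's formula ``once $\id Y<\infty$ is established,'' which is circular. Worse, the claim as you set it up (every nonzero $Y$ has finite injective dimension, before any case distinction) is false. For $\X=\add R$ the minimal approximation is $R\to k$ with $Y=\m$. Over a non-Cohen--Macaulay ring, $\m\neq0$ cannot have finite injective dimension by the Bass conjecture, yet $\add R$ is a legitimate outcome there.

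So the case split must come first, and it must be on projective dimension rather than freeness.
\begin{enumerate}[\rm(1)]
\item If every module of $\X$ has finite projective dimension, then $\Ext_R^{\pd X'}(X',Y)\ne0$ for $Y\ne0$ (Nakayama on a minimal resolution). Hence $\Ext_R^{>0}(\X,Y)=0$ forces $\X=\add R$.
\item If $\X$ contains $G$ with $\pd G=\infty$, the missing idea is Lemma \ref{keylem}. Let $P$ be a minimal free resolution of $G$, and set $\mathbf{X}=\hom_R(P,X)$ and $\mathbf{Y}=\hom_R(P,Y)$. Because $\hom_R(P,k)$ has zero differentials, the surjections $\mathbf{Y}^i\to Z^{i+1}\mathbf{Y}$ (available since $\Ext_R^{\gg0}(G,Y)=0$) factor through $\mathbf{X}^i\cong X^{\oplus r_i}$. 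Together with $\Ext_R^{\gg0}(X,B)=0$, this yields surjections $\Ext_R^i(Z^i\mathbf{Y},B)\to\Ext_R^{i+1}(Z^{i+1}\mathbf{Y},B)$ with kernels $\Ext_R^i(Y,B)^{\oplus r_i}$. By noetherianity these must eventually be isomorphisms. Since $r_i>0$ (exactly where $\pd G=\infty$ enters), you get $\Ext_R^{\gg0}(Y,B)=0$ and hence $\Ext_R^{\gg0}(k,B)=0$.
\end{enumerate}
For $B=Y$ this is precisely the vanishing $\Ext_R^{\gg0}(Y,Y)=0$ you correctly identified as needed. In general it gives $\id B<\infty$ for every $B\in\X^{\perp_{\gg0}}$ (Theorem \ref{inj}).

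\textbf{Two further failures.} First, a resolving subcategory containing $k$ need not be $\mod R$. If $\dim R\ge2$, the modules that are free on the punctured spectrum form a proper resolving subcategory containing $k$. You must use contravariant finiteness for every $M$. The kernel $Y_M\in\X^\perp$ of an approximation of $M$ satisfies $\Ext_R^{i}(k,Y_M)=0$ for all $i>0$. So $\id Y_M=\sup\{i\mid\Ext_R^i(k,Y_M)\ne0\}\le0$, hence $Y_M$ is injective and the approximation splits. The paper cites \cite[Lemma 3.10]{T2011} for this.

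Second, $\syz^dk\in\X$ does not give $\cm(R)\subseteq\X$. Everything built from syzygies of $k$ by extensions, summands and syzygies stays free on the punctured spectrum, so you only reach $\cm_0(R)$. This is strictly smaller than $\cm(R)$ unless $R$ has an isolated singularity. Also, describing the kernel as a sum of copies of a dualizing module presupposes a canonical module, which is not assumed. Instead, approximate each $M\in\cm(R)$ directly. The kernel $Y_M\in\X^\perp$ has finite injective dimension by Theorem \ref{inj}, so $\Ext_R^1(M,Y_M)=0$ by Ischebeck's formula. The approximation then splits and $M\in\X$.
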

\noindent
Thus, this theorem removes the Gorenstein assumption from Theorem \ref{int_T2011} and determines all possible contravariantly finite resolving subcategories over henselian local rings.

We next apply Theorem \ref{int_thm1} to resolving subcategories of finite type. 
Recall that a subcategory $\X$ of $\mod R$ is said to be of finite type if $\add\X$ contains only finitely many isomorphism classes of indecomposable modules, and that a Cohen--Macaulay local ring $R$ is said to have finite CM type if $\cm(R)$ is of finite type.
\begin{thm}[Theorems \ref{thm_fintype} and \ref{thm_conj}]\label{int_thm2}
The following hold.
\begin{enumerate}[\rm(1)]
\item
Let $R$ be a local ring.
If $\X$ is a resolving subcategory of $\mod R$ that contains a nonfree $R$-module and is of finite type, then $R$ is Cohen--Macaulay of finite CM type and $\X=\cm(R)$.
\item
Every Cohen--Macaulay local ring of finite CM type is uniformly dominant.
\end{enumerate}
\end{thm}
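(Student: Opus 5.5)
We would prove the two parts separately, deriving (1) from Theorem \ref{int_thm1} and then (2) from (1).

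\emph{Part (1).} The first step is to show that a resolving subcategory $\X$ of finite type is contravariantly finite. Since $\X$ is resolving, it is closed under direct summands, so $\X=\add\X$. Let $G$ be the direct sum of representatives of the finitely many indecomposable modules in $\X$. Every module in $\X$ decomposes, possibly non-uniquely if $R$ is not henselian, into a finite direct sum of indecomposables. Hence $\X=\add G$. For $M\in\mod R$, choose generators $f_1,\dots,f_n$ of the finitely generated $R$-module $\hom_R(G,M)$. Then $(f_1,\dots,f_n)\colon G^{\oplus n}\to M$ is a right $\X$-approximation, so $\X$ is contravariantly finite.

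\emph{Reduction to the henselian case.} Theorem \ref{int_thm1} needs $R$ henselian, and we expect this to be the main obstacle. We would pass to the completion $\widehat R$. Let $\widehat\X$ be the resolving closure in $\mod\widehat R$ of the modules $\widehat M$ with $M\in\X$. The task is to show that $\widehat\X$ is still of finite type and still contains a nonfree module. The nonfree part is immediate, because completion is faithfully flat and so $\widehat M$ is free exactly when $M$ is free.

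For finiteness of type, we would first try to show that $\widehat\X$ is just $\add\{\widehat M\mid M\in\X\}$. Closure under syzygies and kernels of epimorphisms should follow from exactness of completion together with the Krull--Schmidt property over $\widehat R$. Extensions are the delicate point. Here we would use that $\Ext^1_{\widehat R}(\widehat M,\widehat N)\cong\Ext^1_R(M,N)\otimes_R\widehat R$, and that every class in it is a finite combination of classes coming from $R$. The aim is to show that each extension is, up to $\add$, the completion of an extension over $R$.

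If this approach fails, the fallback is to work with the depth of modules directly, since depth is preserved by completion. The fallback proves that $\X$ contains only modules of depth $\ge\depth R$, hence $\X\subseteq\cm(R)$, and then that $\cm(R)\subseteq\X$, using the henselian statement only over $\widehat R$.

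\emph{Identifying $\X$.} Once we are in the henselian case, Theorem \ref{int_thm1} applies. Since $\X$ contains a nonfree module, $\X\ne\add R$. So either $\X=\mod R$, or $R$ is Cohen--Macaulay and $\X=\cm(R)$. Suppose $\X=\mod R$ and $\dim R\ge1$. The cyclic modules $R/\m^n$ are indecomposable and pairwise non-isomorphic, which contradicts finite type. Hence $R$ is artinian, and then $\mod R=\cm(R)$. In either case $R$ is Cohen--Macaulay and $\X=\cm(R)$, and finite type of $\X$ says exactly that $R$ has finite CM type.

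\emph{Part (2).} Let $R$ be Cohen--Macaulay of finite CM type and $d=\dim R$. Take a nonfree module $M$ of infinite projective dimension, the case relevant to dominance. Then $\syz^dM$ is a nonfree maximal Cohen--Macaulay module. Its resolving closure is contained in $\cm(R)$, so it is of finite type, and by part (1) it equals $\cm(R)$; in particular every indecomposable maximal Cohen--Macaulay module lies in the resolving closure of $M$. For uniformity, note that there are only finitely many indecomposable nonfree maximal Cohen--Macaulay modules $N_1,\dots,N_r$, and each $\syz^dM$ is a direct sum of these. For each pair $(i,j)$, let $n_{ij}$ be the number of resolving steps needed to build $N_j$ from $N_i$. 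Taking $n=\max_{i,j}n_{ij}$, plus the $d$ syzygy steps, gives a bound independent of $M$, which is the uniform dominance required.
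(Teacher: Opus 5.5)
\textbf{Gap: the reduction to the complete case.} You correctly flag this step as the main obstacle, but you leave it open. To apply Corollary \ref{mainthm} over $\widehat R$, you need $\add_{\widehat R}\widehat G$ (where $\X=\add G$) to be closed under extensions. Your mechanism does not give this. Writing a class of $\Ext^1_{\widehat R}(\widehat M,\widehat N)\cong\Ext^1_R(M,N)\otimes_R\widehat R$ as an $\widehat R$-linear combination of completed classes says nothing about its middle term. Scalar multiples and Baer sums are formed using pushouts, i.e.\ cokernels of monomorphisms, and resolving subcategories are not closed under these. The depth fallback does not help either. To use the henselian statement over $\widehat R$ you still need a contravariantly finite resolving subcategory of $\mod\widehat R$ controlled by $\X$, which is the same unresolved problem. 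You also offer no independent route to $\depth X'\ge\depth R$ over $R$. Separately, closure of $\add\widehat G$ under kernels of epimorphisms does not follow from Krull--Schmidt, since epimorphisms in $\add\widehat G$ need not come from $R$. Use extension and syzygy closure together with Remark \ref{rmk_appr}(4) instead.

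\textbf{How to fill it.} The paper takes extension-closure of $\add_{\widehat R}\widehat G$ from the proof of \cite[Theorem 4.3]{CPST}. It can be supplied by showing that finite type forces $\Ext^1_R(M,N)$ to have finite length for $M,N\in\X$. Given $\xi\in\Ext^1_R(M,N)$ and $r\in\m$, let $E_t$ be the middle term of $r^t\xi$, and let $p_s\colon E_s\to M$ be the projection. The $E_t$ lie in $\X=\add G$, which has finitely many indecomposables, and each needs at most $\mu(M)+\mu(N)$ generators. So only finitely many isomorphism classes occur, and $E_s\cong E_t$ for some $s<t$. Since $r^t\xi$ is the pullback of $r^s\xi$ along $r^{t-s}$, there is an exact sequence $0\to E_t\to E_s\oplus M\xrightarrow{(p_s,\,-r^{t-s})}M\to0$. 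This sequence splits by Miyata's theorem. A splitting $(\sigma_1,\sigma_2)$ gives $p_s\sigma_1=1+r^{t-s}\sigma_2$, which is an automorphism of $M$ because $t>s$ and $r\in\m$. Hence $p_s$ splits and $r^s\xi=0$. Thus every element of the finitely generated module $\Ext^1_R(M,N)$ is killed by a power of every $r\in\m$, so it has finite length, and $\Ext^1_R(M,N)\cong\Ext^1_{\widehat R}(\widehat M,\widehat N)$. Consequently every extension of $\widehat G^{\oplus a}$ by $\widehat G^{\oplus b}$ is the completion of one over $R$, and padding with trivial extensions shows that $\add_{\widehat R}\widehat G$ is extension-closed.

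\textbf{Remaining steps.} You must then descend explicitly. Apply Corollary \ref{mainthm} to $\add_{\widehat R}\widehat G$, and use Lemma \ref{lem_compl}(2) to obtain $\X=\mod R$ or $\X=\cm(R)$. Your $R/\m^n$ argument then correctly disposes of the first case. Part (2) is the paper's argument once made precise:
\begin{enumerate}[\rm(i)]
\item ``Resolving steps'' should mean the level in the filtration $\lbrack N_i\rbrack_m$, which exhausts $\res N_i$.
\item Your $n=\max_{i,j}n_{ij}$ then gives $\syz^dk\in\lbrack N_i\rbrack_n\subseteq\lbrack M\rbrack_n$ for every $M$ with $\pd M=\infty$.
\item \cite[Proposition 3.8]{KT2026} converts this into $\operatorname{dx}(R)\le n-1$.
\end{enumerate}
No extra $d$ syzygy steps are needed, since $\lbrack\,\cdot\,\rbrack_1$ already contains all syzygies.
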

\noindent
Since every uniformly dominant local ring is dominant, Theorem \ref{int_thm2}(2) gives a stronger affirmative answer to \cite[Conjecture 9.1]{T2023}. 

The following result is used in the proof of Theorem \ref{int_thm1} and shows that the assumption in \cite[Theorem 3.4]{T2011} that $\Ext_R^i(G,R)=0$ for all $i\gg0$ is superfluous.
\begin{thm}[Theorem \ref{inj}]\label{int_thm3}
Let $R$ be a local ring, and let $\X$ be a resolving subcategory of $\mod R$. 
Suppose that $\X$ contains an $R$-module of infinite projective dimension and that the residue field $k$ has a minimal right $\X$-approximation. 
If $M\in\mod R$ satisfies $\Ext_R^i(X,M)=0$ for all $i\gg0$ and every $X\in\X$, then $\id M<\infty$.
\end{thm}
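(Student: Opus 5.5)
The plan is to reduce the vanishing of $\Ext_R^{i}(k,M)$ for $i\gg0$ to a statement about the kernel of the minimal approximation of $k$. Let $f\colon X_0\to k$ be a minimal right $\X$-approximation. Since $R\in\X$, the projection $R\to k$ factors through $f$, so $f$ is surjective and we get an exact sequence
\[
0\to Y\to X_0\xrightarrow{f} k\to 0 .
\]
By Wakamatsu's lemma, since $\X$ is closed under extensions and $f$ is right minimal, $\Ext_R^1(X,Y)=0$ for all $X\in\X$. Since $\X$ is closed under syzygies, dimension shifting gives $\Ext_R^i(X,Y)=0$ for all $i\ge1$ and all $X\in\X$, so $Y\in\X^{\perp}$. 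If $Y=0$ then $k\in\X$ and we are done at once, since $\Ext_R^i(k,M)=0$ for $i\gg0$ forces $\id M<\infty$.

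Consider the class $\mathcal C$ of modules $N$ with $\Ext_R^i(N,M)=0$ for $i\gg0$. It is closed under direct summands, extensions, syzygies, and kernels of epimorphisms, and also satisfies the two-out-of-three property on short exact sequences. By hypothesis $\X\subseteq\mathcal C$. From the sequence above, $\Ext_R^{i+1}(k,M)\cong\Ext_R^i(Y,M)$ for $i\gg0$, so it suffices to show $Y\in\mathcal C$.

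To show this, I would resolve $Y$ by modules of $\X\cap\X^{\perp}$, imitating Auslander--Buchweitz. First, $Y$ has a right $\X$-approximation obtained by pulling back along $f$ or by a minimal approximation of $Y$ itself; I expect contravariant finiteness is not needed for $Y$, only for $k$. I would try to build exact sequences
\[
0\to Y_{n+1}\to W_n\to Y_n\to0,\qquad Y_0=Y,
\]
with $W_n\in\X\cap\X^{\perp}\subseteq\mathcal C$ and $Y_{n+1}\in\X^{\perp}$. Concretely, each $W_n$ would come from a push-out of the sequence for $k$ against a free cover, using that extensions of modules in $\X^{\perp}$ stay in $\X^{\perp}$. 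Then $Y\in\mathcal C$ iff $Y_n\in\mathcal C$ for any single $n$. The aim is to show that some $Y_n$ lies in $\X$, or has finite projective dimension, which puts it in $\mathcal C$.

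The main obstacle is this termination step, and it is where the existence of a module $G\in\X$ with $\pd G=\infty$ must be used. This hypothesis replaces Takahashi's assumption $\Ext_R^{i}(G,R)=0$ for $i\gg0$. My first attempt would be as follows. Since $\syz^jG\in\X$ for all $j$, the vanishing $\Ext_R^{i}(\syz^jG,Y_n)=0$, combined with $Y_n$ being (after $d=\depth R$ steps) a high syzygy-like module, should force the relevant Ext modules against $k$ to be controlled by $\Ext_R^{i}(G,-)$. Infinite projective dimension of $G$ means that $\Tor^R_{i}(G,k)\ne0$ for all $i$. I would then try to use a nonvanishing-to-vanishing argument, in the style of the lemmas of \cite{T2011}, to show that $Y_n$ must be free for $n\ge d$, i.e.\ that the Bass-type obstruction collapses. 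If this direct route fails, the fallback is to show $\id Y<\infty$ via the same iteration with the roles of $k$ and $Y$ swapped. One would then conclude via the sequence $0\to Y\to X_0\to k\to0$ together with $X_0\in\mathcal C$, bounding the Bass numbers of $M$ by those computed through $Y$.
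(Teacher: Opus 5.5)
\emph{There is a genuine gap: the step that uses $\pd G=\infty$ is missing.}

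Your opening reduction is correct and matches the paper. Wakamatsu's lemma gives $0\to Y\to X_0\to k\to0$ with $Y\in\X^\perp$. Since $\Ext_R^{\gg0}(X_0,M)=0$, it suffices to prove $\Ext_R^{i}(Y,M)=0$ for $i\gg0$. What follows that reduction, however, is not a proof, and the route you sketch cannot be carried out from these hypotheses:
\begin{enumerate}[\rm(1)]
\item The sequences $0\to Y_{n+1}\to W_n\to Y_n\to0$ with $W_n\in\X\cap\X^\perp$ and $Y_{n+1}\in\X^\perp$ amount to special right $\X$-approximations of the $Y_n$. Only $k$ is assumed to have an approximation. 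There is also no cogenerator and no finite $\X$-resolution dimension with which to run Auslander--Buchweitz theory.
\item Free covers do not help, since in general $R\notin\X^\perp$. For example, take $\X=\cm(R)$ over a non-Gorenstein Cohen--Macaulay ring of dimension $d$. Then $\syz^dk\in\X$, so $R\in\X^\perp$ would force $\Ext_R^{>d}(k,R)=0$, which is impossible.
\item The termination step is left as ``should force'' and ``I would try'', so the hypothesis $\pd G=\infty$ is never actually used.
\item Your fallback of proving $\id Y<\infty$ is the statement itself applied to $M=Y\in\X^\perp\subseteq\X^{\perp_{\gg0}}$, so it is circular.
\end{enumerate}

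The missing idea is to push the sequence $0\to Y\to X_0\to k\to0$ through a minimal free resolution $P$ of $G$.
\begin{enumerate}[\rm(1)]
\item Applying $\hom_R(P,-)$ gives a short exact sequence of complexes $0\to\mathbf{Y}\to\mathbf{X}\to K\to0$.
\item Minimality of $P$ makes all differentials of $K=\hom_R(P,k)$ zero. The vanishing $\Ext_R^{>0}(G,Y)=0$ makes $\mathbf{Y}$ exact in high degrees.
\item A diagram chase then shows that, for $i\gg0$, the surjection $b_i\colon\mathbf{Y}^i\cong Y^{\oplus r_i}\to Z^{i+1}\mathbf{Y}$ factors through $\mathbf{X}^i\cong X_0^{\oplus r_i}$. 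Here $r_i$ is the $i$-th Betti number of $G$.
\item Apply $\Ext_R(-,M)$ to $0\to Z^i\mathbf{Y}\to\mathbf{Y}^i\to Z^{i+1}\mathbf{Y}\to0$. The vanishing $\Ext_R^{\gg0}(X_0,M)=0$ kills $\Ext_R^i(b_i,M)$ and $\Ext_R^{i+1}(b_i,M)$. This yields exact sequences $0\to\Ext_R^i(Y,M)^{\oplus r_i}\to D_i\to D_{i+1}\to0$ with $D_i=\Ext_R^i(Z^i\mathbf{Y},M)$.
\item A chain of surjections starting from the finitely generated module $D_n$ eventually consists of isomorphisms, because the kernels form an ascending chain in $D_n$. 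Hence $\Ext_R^i(Y,M)^{\oplus r_i}=0$ for $i\gg0$.
\item Finally, $r_i>0$ for every $i$ precisely because $\pd G=\infty$, so $\Ext_R^i(Y,M)=0$ for $i\gg0$.
\end{enumerate}
This is the paper's Lemma~\ref{keylem}, and it replaces your whole resolution-and-termination scheme.
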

\noindent
The proof of Theorem \ref{int_thm3} is much simpler than that of \cite[Theorem 3.4]{T2011}.
We also give further applications to test modules for finite projective and injective dimensions; see Propositions \ref{prop_EP} and \ref{prop_EI}.

The organization of this paper is as follows. 
In Section 2, we prove the key lemma and use it to establish Theorem \ref{inj} and Corollary \ref{mainthm}. 
In Section 3, we determine the resolving subcategories of finite type and apply this result to Cohen--Macaulay local rings of finite CM type, giving an affirmative answer to a conjecture of Takahashi.
In Section 4, we give further applications of our results.
\begin{conv}
Throughout this paper, all subcategories are assumed to be strictly full. Unless otherwise specified, $(R,\m,k)$ denotes a commutative noetherian local ring with maximal ideal $\m$ and the residue field $k$. 
\end{conv}
\section{Proof of the main theorem}
The aim of this section is to prove the main theorem of this paper. 
We begin by recalling the notions of contravariantly finite and resolving subcategories.
We refer to \cite{AB1969,AR1991,AS1980,AS1981,T2011} for basic properties of approximations and resolving subcategories.
\begin{dfn}\label{def_appr}
Let $\X$ be a subcategory of $\mod R$ and $M\in\mod R$.
\begin{enumerate}[\rm(1)]
\item 
The {\em additive closure} of $\X$, denoted by $\add\X$, is the subcategory of $\mod R$ consisting of all direct summands of finite direct sums of modules in $\X$.
\item 
A homomorphism $f:X\to M$ with $X\in\X$ is called a {\em right $\X$-approximation} of $M$ if $\hom_R(X',f):\hom_R(X',X)\to\hom_R(X',M)$ is surjective for every $X'\in\X$. 
It is called {\em minimal} if every endomorphism $g:X\to X$ satisfying $f=fg$ is an automorphism. 
The subcategory $\X$ is called {\em contravariantly finite} in $\mod R$ if every module in $\mod R$ has a right $\X$-approximation.
\item 
Let $\cdots\to P_1\to P_0\to M\to0$ be a minimal free resolution of $M$. 
For $n\ge1$, the $n$-th syzygy $\syz^nM$ of $M$ is the image of $P_n\to P_{n-1}$, and we set $\syz^0M=M$.
\item 
The subcategory $\X$ is called {\em resolving} if it contains all finitely generated projective $R$-modules and is closed under direct summands, extensions, and kernels of epimorphisms.
The \emph{resolving closure} of $\X$, denoted by $\res\X$, is the smallest resolving subcategory of $\mod R$ containing $\X$.
\item 
We set ${}^\perp\X=\{N\in\mod R\mid\Ext_R^i(N,X)=0\text{ for all }X\in\X\text{ and }i>0\}$, $\X^\perp=\{N\in\mod R\mid\Ext_R^i(X,N)=0\text{ for all }X\in\X\text{ and }i>0\}$, ${}^{\perp_{\gg0}}\X=\{N\in\mod R\mid\Ext_R^i(N,X)=0\text{ for all }X\in\X\text{ and }i\gg0\}$, and $\X^{\perp_{\gg0}}=\{N\in\mod R\mid\Ext_R^i(X,N)=0\text{ for all }X\in\X\text{ and }i\gg0\}$.
\item
Let $R$ be a Cohen--Macaulay local ring with a canonical module. 
An exact sequence $0\to Y_{M}\to X_{M}\to M\to0$ is called a \emph{maximal Cohen--Macaulay approximation} of $M$ if $X_{M}\in\cm(R)$ and $\id Y_{M}<\infty$. 
Dually, an exact sequence $0\to M\to Y^{M}\to X^{M}\to0$ is called an \emph{FID hull} of $M$ if $\id Y^{M}<\infty$ and $X^{M}\in\cm(R)$; see \cite[Definitions 11.8 and 11.10]{LW}. 
Every finitely generated $R$-module admits a maximal Cohen--Macaulay approximation and an FID hull; see \cite[Theorem 11.17]{LW}.
\end{enumerate}
\end{dfn}
\begin{rem}\label{rmk_appr}
Let $\X$ be a subcategory of $\mod R$.
\begin{enumerate}[\rm(1)]
\item 
The category $\add R$ coincides with the category of finitely generated projective $R$-modules.
\item 
Every right $\X$-approximation is a right $\add\X$-approximation.
\item 
For every $G\in\mod R$, the subcategory $\add G$ is contravariantly finite in $\mod R$.
\item 
Suppose that $\X$ contains $R$ and is closed under direct summands and extensions. 
Then $\X$ is resolving if and only if it is closed under syzygies.
\item
The subcategories ${}^\perp\X$ and ${}^{\perp_{\gg0}}\X$ are resolving.
\item 
Suppose that $\add R\subseteq\X$. 
Then every right $\X$-approximation $X\to M$ is surjective.
\item 
Suppose that $R$ is henselian and that $\X$ is closed under direct summands. 
If $M$ has a right $\X$-approximation, then it has a minimal right $\X$-approximation; see \cite[Corollary 2.5]{T2005}.
\item 
Suppose that $\X$ is closed under extensions and that $f:X\to M$ is a minimal right $\X$-approximation. 
Then $\Ext_{R}^{1}(\X,\ker f)=0$ by Wakamatsu's lemma; see \cite[Lemma 2.1.1]{Xu}.
\item 
Suppose that $\X$ is resolving and that $f:X\to M$ is a minimal right $\X$-approximation. 
Then there exists an exact sequence $0\to Y\to X\xrightarrow{f}M\to0$ with $Y\in\X^\perp$.
\end{enumerate}
\end{rem}
To prove Theorem \ref{mainthm}, we first establish the following lemma, which plays a central role in the proof.
For a complex $C=(\cdots\to C^{i-1}\xrightarrow{d_C^{i-1}}C^i\xrightarrow{d_C^i}C^{i+1}\to\cdots)$ of $R$-modules and $i\in\mathbb{Z}$, we set $Z^{i}C=\ker d_{C}^{i}$.
\begin{lem}\label{keylem}
Let $A,B\in\mod R$.
Assume that one of the following conditions holds:
\begin{enumerate}[\rm(1)]
\item 
There exists an exact sequence $0\to Y\to X\to k^{\oplus r}\to0$ in $\mod R$ for some integer $r>0$.
\item 
There exists an exact sequence $0\to k^{\oplus r}\to Y\to X\to0$ in $\mod R$ for some integer $r>0$.
\end{enumerate}
If one has $\Ext_R^{\gg0}(A,Y)=\Ext_R^{\gg0}(X,B)=0$, then we have $\pd A<\infty$ or $\id B<\infty$.
\end{lem}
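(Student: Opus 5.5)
The plan is to turn the two vanishing hypotheses into the vanishing of Yoneda products $\Ext_R^{i}(A,k)\times\Ext_R^{j+1}(k,B)\to\Ext_R^{i+j+1}(A,B)$ in high degrees, and then show that such vanishing forces $\pd A<\infty$ or $\id B<\infty$. I treat case (1) in detail; case (2) should be the formal dual, with the roles of $A$ and $B$ (and of lifting and extending) exchanged.

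\emph{Step 1 (lifting and extending classes).} Let $\pi:X\to k^{\oplus r}$ be the given surjection and $\eta\in\Ext_R^1(k^{\oplus r},Y)$ the class of the sequence $0\to Y\to X\to k^{\oplus r}\to0$. Fix $\iota:k\to k^{\oplus r}$ and $p:k^{\oplus r}\to k$ with $p\iota=\id_k$.
\begin{itemize}
\item Since $\Ext_R^{i+1}(A,Y)=0$ for $i\gg0$, the long exact sequence shows that $\Ext_R^i(A,X)\to\Ext_R^i(A,k^{\oplus r})$ is surjective. Hence every $\alpha\in\Ext_R^i(A,k)$ satisfies $\iota\alpha=\pi\alpha'$ for some $\alpha'\in\Ext_R^i(A,X)$.
\item Since $\Ext_R^{j+1}(X,B)=0$ for $j\gg0$, the connecting map $\Ext_R^j(Y,B)\to\Ext_R^{j+1}(k^{\oplus r},B)$, $\beta'\mapsto\beta'\eta$, is surjective. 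Hence every $\beta\in\Ext_R^{j+1}(k,B)$ satisfies $\beta p=\beta'\eta$ for some $\beta'\in\Ext_R^j(Y,B)$.
\end{itemize}

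\emph{Step 2 (the product vanishes).} For such $\alpha,\beta$ with $i,j\gg0$,
$$\beta\alpha=\beta p\iota\alpha=\beta'\eta\pi\alpha'=0,$$
because $\eta\pi=0$: these are consecutive morphisms in the triangle induced by the short exact sequence. So the Yoneda pairing $\Ext_R^{i}(A,k)\times\Ext_R^{n}(k,B)\to\Ext_R^{i+n}(A,B)$ vanishes for all $i,n\gg0$.

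\emph{Step 3 (the main obstacle).} It remains to show that if $\pd A=\infty$ and $\id B=\infty$, then this pairing is nonzero for some arbitrarily large $i,n$. First I would make the classes concrete.
\begin{itemize}
\item Since $\pd A=\infty$, the minimal syzygy $\syz^iA$ is nonzero for every $i$, so there is a surjection $\syz^iA\to k$. This gives a class $\alpha\neq0$.
\item Since $\id B=\infty$, the $n$-th cosyzygy $C^n$ of a minimal injective resolution of $B$ has nonzero socle. An embedding $k\hookrightarrow C^n$ gives a class $\beta\neq0$.
\end{itemize}
The product $\beta\alpha$ is then represented by the composite $\syz^iA\to k\hookrightarrow C^n$, computed in $\Ext_R^{i+n}(A,B)\cong\Ext_R^n(\syz^iA,B)$. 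I must show that this composite does not vanish modulo maps factoring through $\syz^iA\subseteq P_{i-1}$ or through $C^n\subseteq E^n$.

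First attempt: use minimality on both sides. We have $\syz^iA\subseteq\m P_{i-1}$, and $E^n$ is an injective hull of $C^n$, so every socle element of $C^n$ is a socle element of $E^n$. Because of the minimality on both sides, a factorization through $P_{i-1}$ or $E^n$ should send $\syz^iA$ into something too "deep" to hit the chosen socle element.

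If this direct argument does not close, the fallback is to work in the derived category. Minimality of the resolutions makes $\Ext_R(A,k)\otimes\Ext_R(k,B)\to\Ext_R(A,B)$ nonzero in infinitely many degrees, and I would do this by choosing the socle element and the surjection compatibly, possibly changing $i$ and $n$ by bounded amounts. Establishing this nonvanishing is where I expect the real work to lie.
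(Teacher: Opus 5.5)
Your Steps 1 and 2 are correct. Step 3, on which the whole plan rests, is false, so the argument cannot be completed along these lines. Vanishing of the Yoneda pairing $\Ext_R^i(A,k)\times\Ext_R^n(k,B)\to\Ext_R^{i+n}(A,B)$ for $i,n\gg0$ does not imply $\pd A<\infty$ or $\id B<\infty$.

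Take $R=k[x,y]/(x^2,y^2)$, $A=R/xR$ and $B=R/yR$.
\begin{itemize}
\item Since $(0:_R x)=xR$, the minimal free resolution of $A$ is $\cdots\xrightarrow{x}R\xrightarrow{x}R\to A\to0$, so $\pd A=\infty$.
\item Since $R$ is artinian Gorenstein and $B$ is not free, $B$ is not injective, so $\id B=\infty$.
\item Applying $\hom_R(-,B)$ to the resolution gives $B\xrightarrow{x}B\xrightarrow{x}\cdots$. On $B=k\bar 1\oplus k\bar x$, both the kernel and the image of multiplication by $x$ equal $k\bar x$.
\end{itemize}
Hence $\Ext_R^i(A,B)=0$ for every $i\ge1$, and your pairing is identically zero in all positive total degrees. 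In particular, every composite $\syz^iA\to k\hookrightarrow C^n$ represents zero in $\Ext_R^{i+n}(A,B)$. So neither the \emph{minimality on both sides} heuristic nor the derived-category fallback can produce a nonzero product.

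What Step 2 throws away is the uniformity of the hypothesis: one fixed pair of finitely generated modules $X,Y$ controls all degrees at once. The long exact sequence from your Step 1 already gives $\Ext_R^j(Y,B)\cong\Ext_R^{j+1}(k,B)^{\oplus r}$ for $j\gg0$. So the right target is $\Ext_R^{\gg0}(Y,B)=0$, and this is exactly what the paper proves:
\begin{itemize}
\item With $P$ a minimal free resolution of $A$, apply $\hom_R(P,-)$ to get $0\to\mathbf{Y}\to\mathbf{X}\to K\to0$. Minimality makes the differential of $K$ zero.
\item This lets one factor the surjection $b_i\colon\mathbf{Y}^i\to Z^{i+1}\mathbf{Y}$ through $\mathbf{X}^i\cong X^{\oplus r_i}$. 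The surjection exists for $i\gg0$ because $\Ext_R^{\gg0}(A,Y)=0$.
\item Since $\Ext_R^{\gg0}(X,B)=0$, the maps $\Ext_R^i(b_i,B)$ and $\Ext_R^{i+1}(b_i,B)$ vanish. This gives exact sequences $0\to\Ext_R^i(Y,B)^{\oplus r_i}\to D_i\to D_{i+1}\to0$ with $D_i=\Ext_R^i(Z^i\mathbf{Y},B)$ finitely generated.
\item A chain of surjections starting from a noetherian module is eventually a chain of isomorphisms. Hence $\Ext_R^i(Y,B)^{\oplus r_i}=0$ for $i\gg0$.
\item Since $r_i>0$ (this is where $\pd A=\infty$ is used), $\Ext_R^{\gg0}(Y,B)=0$. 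Therefore $\Ext_R^{\gg0}(k,B)=0$ and $\id B<\infty$.
\end{itemize}
The missing ingredient in your proposal is this noetherian stabilization, which turns a degreewise factorization into actual vanishing. As the example shows, no statement about individual Yoneda products through $k$ can replace it.
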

\begin{proof}
(1) Assume that $\pd A=\infty$. 
We show that $\id B<\infty$. 
Let $P$ be a minimal free resolution of $A$.
Set $\mathbf{Y}=\hom_R(P,Y)$, $\mathbf{X}=\hom_R(P,X)$, and $K=\hom_R(P,k^{\oplus r})$.
Applying $\hom_R(P,-)$ to the given exact sequence yields an exact sequence $0\to \mathbf{Y}\xrightarrow{f}\mathbf{X}\xrightarrow{g}K\to0$ of $R$-complexes. 
Since $\H^{i+1}(\mathbf{Y})=\Ext_R^{i+1}(A,Y)=0$ for $i\gg0$, we have an exact sequence $0\to Z^{i}\mathbf{Y}\to \mathbf{Y}^{i}\xrightarrow{b_{i}}Z^{i+1}\mathbf{Y}\to0$ for $i\gg0$.
We show that $b_{i}$ factors through $\mathbf{X}^{i}$ for $i\gg0$.
Since $P$ is minimal and $\m k^{\oplus r}=0$, one has $d_K^i=0$ for every $i$.
Consider the following commutative diagram.
\[
\xymatrix{
0 \ar[r] & \mathbf{Y}^{i} \ar[r]^{f^{i}} \ar[d]^{d_{\mathbf{Y}}^{i}}
& \mathbf{X}^{i} \ar[r]^{g^{i}} \ar[d]^{d_{\mathbf{X}}^{i}} \ar@{-->}[dl]_{\alpha^{i}}
& K^{i} \ar[r] \ar[d]^{d_{K}^{i}=0} & 0\\
0 \ar[r] & \mathbf{Y}^{i+1} \ar[r]^{f^{i+1}} \ar[d]^{d_{\mathbf{Y}}^{i+1}}
& \mathbf{X}^{i+1} \ar[r]^{g^{i+1}} \ar[d]^{d_{\mathbf{X}}^{i+1}}
& K^{i+1} \ar[r] & 0\\
0 \ar[r] & \mathbf{Y}^{i+2} \ar[r]^{f^{i+2}}
& \mathbf{X}^{i+2}. &&
}
\]
As $g^{i+1}d_{\mathbf{X}}^{i}=d_{K}^{i}g^{i}=0$, there exists a homomorphism $\alpha^{i}:\mathbf{X}^{i}\to \mathbf{Y}^{i+1}$ with $f^{i+1}\alpha^{i}=d_{\mathbf{X}}^{i}$. 
A diagram chase, together with the injectivity of $f^{i+1}$ and $f^{i+2}$, shows that $\alpha^{i}f^{i}=d_{\mathbf{Y}}^{i}$ and $d_{\mathbf{Y}}^{i+1}\alpha^{i}=0$.
Hence $\operatorname{Im}(\alpha^{i})\subseteq Z^{i+1}\mathbf{Y}$, and $\alpha^{i}$ induces a homomorphism $\overline{\alpha}^{i}:\mathbf{X}^{i}\to Z^{i+1}\mathbf{Y}$. Thus we have a commutative diagram
\[
\xymatrix{
0 \ar[r] & \mathbf{Y}^{i} \ar[r]^{f^{i}} \ar[d]_{b_{i}}
& \mathbf{X}^{i} \ar[d]^{\alpha^{i}} \ar@{-->}[dl]_{\overline{\alpha}^{i}}\\
0 \ar[r] & Z^{i+1}\mathbf{Y} \ar[r]
& \mathbf{Y}^{i+1}.
}
\]
In particular, $b_{i}=\overline{\alpha}^{i}f^{i}$.
Write $P^{-i}\cong R^{\oplus r_{i}}$. 
Since $\pd A=\infty$, one has $P^{-i}\ne0$, and hence $r_{i}>0$, for every $i\ge0$. 
Thus $\mathbf{X}^{i}\cong X^{\oplus r_{i}}$ and $\mathbf{Y}^{i}\cong Y^{\oplus r_{i}}$. 
As $\Ext_{R}^{\gg0}(X,B)=0$, we have $\Ext_R^{i}(\mathbf{X}^{i},B)=\Ext_R^{i+1}(\mathbf{X}^{i},B)=0$ for $i\gg0$.
Since $b_{i}$ factors through $\mathbf{X}^{i}$ for $i\gg0$, we have $\Ext_{R}^{i}(b_{i},B)=\Ext_{R}^{i+1}(b_{i},B)=0$ for $i\gg0$.
Hence the long exact sequence associated to $0\to Z^{i}\mathbf{Y}\to \mathbf{Y}^{i}\xrightarrow{b_{i}}Z^{i+1}\mathbf{Y}\to0$ yields an exact sequence $0\to\Ext_R^{i}(Y,B)^{\oplus r_{i}}\to D_{i}\xrightarrow{\varepsilon_{i}}D_{i+1}\to0$ for $i\gg0$, where $D_{i}=\Ext_R^{i}(Z^{i}\mathbf{Y},B)$.
Choose $n$ so that the preceding exact sequence exists for every $i\ge n$. 
Note that $D_{n}=\Ext_R^{n}(Z^{n}\mathbf{Y},B)$ is a finitely generated $R$-module. 
Since $D_{n}\xrightarrow{\varepsilon_{n}}D_{n+1}\xrightarrow{\varepsilon_{n+1}}D_{n+2}\to\cdots$ is a sequence of surjective homomorphisms, \cite[Lemma 4.5]{T2011} shows that $\varepsilon_{i}$ is an isomorphism for $i\gg0$. 
It follows that $\Ext_R^{i}(Y,B)^{\oplus r_{i}}=0$ for $i\gg0$. As $r_{i}>0$, we obtain $\Ext_R^{i}(Y,B)=0$ for $i\gg0$.
The long exact sequence associated to $0\to Y\to X\to k^{\oplus r}\to0$, together with $\Ext_{R}^{\gg0}(X,B)=\Ext_{R}^{\gg0}(Y,B)=0$, yields $\Ext_{R}^{\gg0}(k,B)=0$, which implies that $B$ has finite injective dimension.
The assertion (2) is proved in the same way.
\end{proof}
We have now reached the main result of this section. 
The following theorem shows that the assumption $\Ext_{R}^{\gg0}(G,R)=0$ in \cite[Theorem 3.4]{T2011} is unnecessary.
\begin{thm}\label{inj}
Let $\X$ be a resolving subcategory of $\mod R$. 
If there exists an $R$-module $G$ in $\X$ with $\pd G=\infty$ and $k$ has a minimal right $\X$-approximation, then every module in $\X^{\perp_{\gg 0}}$ has finite injective dimension.
\end{thm}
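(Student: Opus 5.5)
The plan is to apply Lemma \ref{keylem}(1) with $A=G$ and $B=M$, where $M\in\X^{\perp_{\gg0}}$ is arbitrary. The exact sequence needed there will come from the minimal right $\X$-approximation of $k$.

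First, let $f:X\to k$ be a minimal right $\X$-approximation. Since $\X$ is resolving, Remark \ref{rmk_appr}(9) gives an exact sequence $0\to Y\to X\xrightarrow{f}k\to0$ with $X\in\X$ and $Y\in\X^\perp$. Surjectivity also follows from Remark \ref{rmk_appr}(6), because $R\in\X$. This is the sequence in condition (1) of Lemma \ref{keylem} with $r=1$.

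Next, I check the two vanishing hypotheses of the lemma. Since $G\in\X$ and $Y\in\X^\perp$, we have $\Ext_R^i(G,Y)=0$ for all $i>0$, hence for $i\gg0$. Since $X\in\X$ and $M\in\X^{\perp_{\gg0}}$, we have $\Ext_R^{i}(X,M)=0$ for $i\gg0$. Lemma \ref{keylem} then yields $\pd G<\infty$ or $\id M<\infty$. The first alternative is excluded by the hypothesis $\pd G=\infty$, so $\id M<\infty$.

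The only step needing any care is getting $Y\in\X^\perp$ rather than just $\Ext^1_R(\X,Y)=0$. This is the content of Remark \ref{rmk_appr}(9), and I take it as given. If it needed justification, I would argue as follows. Wakamatsu's lemma (Remark \ref{rmk_appr}(8)) gives $\Ext^1_R(\X,Y)=0$. Because $\X$ is closed under syzygies (Remark \ref{rmk_appr}(4)), dimension shifting gives $\Ext^{i}_R(X',Y)\cong\Ext^1_R(\syz^{i-1}X',Y)=0$ for every $X'\in\X$ and $i\ge1$. With that in hand, the theorem is a direct corollary of Lemma \ref{keylem}.
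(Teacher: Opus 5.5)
Your proposal is correct and is essentially the paper's own proof: take the sequence $0\to Y\to X\to k\to0$ with $X\in\X$ and $Y\in\X^\perp$ coming from the minimal right $\X$-approximation, note that $\Ext_R^{>0}(G,Y)=0$ and $\Ext_R^{\gg0}(X,M)=0$, and apply Lemma~\ref{keylem}(1) with $A=G$ and $B=M$. Your supplementary justification that $Y\in\X^\perp$ (Wakamatsu's lemma combined with dimension shifting along syzygies in $\X$) is also sound.
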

\begin{proof}
There exists an exact sequence $0\to Y\to X\to k\to 0$ with $X\in \X,Y\in \X^\perp$. 
If $M\in \X^{\perp_{\gg0}}$, then $\Ext_R^{>0}(G,Y)=0$ and $\Ext_R^{\gg 0}(X,M)=0$ hold. 
Therefore, it follows from Lemma \ref{keylem} that $M$ has finite injective dimension.
\end{proof}
As an application of Theorem \ref{inj}, we determine the possible forms of contravariantly finite resolving subcategories over henselian local rings. 
The proof uses Theorem \ref{inj} and the arguments in the proofs of \cite[Corollaries 3.9 and 3.13]{T2011}.
\begin{cor}\label{mainthm}
Let $R$ be a henselian local ring, and let $\X$ be a contravariantly finite resolving subcategory of $\mod R$.
Then $\X=\add R$, $\X=\mod R$, or $R$ is Cohen--Macaulay and $\X=\cm(R)$.
\end{cor}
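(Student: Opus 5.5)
Since $R$ is henselian and $\X$ is contravariantly finite and closed under direct summands, Remark \ref{rmk_appr}(7) gives $k$ a minimal right $\X$-approximation. I split according to whether $\X$ contains a module of infinite projective dimension.

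\emph{Case 1: every module in $\X$ has finite projective dimension.} I will show $\X=\add R$. Take $X\in\X$ with $\pd X=n$, and suppose $n>0$. By Remark \ref{rmk_appr}(9) there is an exact sequence $0\to Y\to X'\to k\to0$ with $X'\in\X$ and $Y\in\X^\perp$. Since $\X$ is resolving, $\syz^{n-1}X\in\X$ and it has projective dimension exactly $1$. Hence $\Ext^1_R(\syz^{n-1}X,R)\neq0$, because a module $N$ of projective dimension $1$ has $\Ext^1_R(N,R)\ne0$.
\begin{itemize}
\item[] Following \cite[Corollary 3.9]{T2011}, I compare depths. Every $Y\in\X^\perp$ satisfies $\Ext^{>0}_R(\syz^{n-1}X,Y)=0$.
\item[] Using the sequence $0\to Y\to X'\to k\to0$ and the fact that all modules in $\X$ have finite projective dimension, the Auslander--Buchweitz formula gives $\pd X'=\depth R-\depth X'$. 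The sequence then forces $\depth Y\le 1$.
\item[] Consequently the Ext-vanishing against $Y$ should contradict $\pd \syz^{n-1}X=1$.
\end{itemize}
I would import this argument rather than reprove it. The shortcut: $\X\subseteq{}^{\perp}(\X^\perp)$, and the minimal approximation of $k$ makes $\X$ essentially $\add$ of the modules of projective dimension at most something. Applying the argument repeatedly reduces to $\X=\add R$.

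\emph{Case 2: some $G\in\X$ has $\pd G=\infty$.} By Theorem \ref{inj}, every module in $\X^{\perp}$ has finite injective dimension. In particular $Y$ does, where $0\to Y\to X\to k\to0$ is the sequence from the minimal approximation.
\begin{itemize}
\item[] If $\id Y<\infty$ with $Y\ne0$, then by Bass's theorem $R$ is Cohen--Macaulay. Moreover $\X^\perp\subseteq\{\id<\infty\}$, and the finite-injective-dimension modules are exactly $\cm(R)^\perp$. From this I aim to deduce $\X\supseteq\cm(R)$ or $\X=\mod R$.
\item[] If $Y=0$, then $k\in\X$. Since $\X$ is resolving, this gives $\X=\mod R$, because every module is filtered by copies of $k$ and has syzygies in $\X$.
\end{itemize}

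Assume now $Y\neq0$, so $\id Y<\infty$ and $R$ is Cohen--Macaulay. Here I argue as in \cite[Corollary 3.13]{T2011}.
\begin{itemize}
\item[] $R$ has a canonical module $\omega$. Indeed, $R$ henselian and Cohen--Macaulay with a nonzero module $Y$ of finite injective dimension, and $Y$ can be taken to be maximal Cohen--Macaulay by replacing it with the FID-hull/MCM-approximation data.
\item[] Next, show $\X\subseteq\cm(R)$ using $\Ext^{>0}_R(X,\omega)=0$. Since $\omega$ has finite injective dimension, $\Ext^{i}_R(X',\omega)=0$ for $i>\dim R$. The minimality argument then shows that the approximation $X\to k$ coincides with the MCM approximation of $k$. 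Comparing with the MCM approximation of an arbitrary $M$ gives that $\X$ contains every MCM module: each $C\in\cm(R)$ admits $\Ext^{>0}(C,\X^\perp)=0$, and $C$ is built from $X$ via syzygies.
\item[] If $\X\ne\cm(R)$, then $\X$ contains a non-MCM module, which pushes to $k\in\X$ and hence $\X=\mod R$.
\end{itemize}

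The main obstacle is this last step: identifying $\X$ with $\cm(R)$ from the sole information that $\X^\perp$ consists of modules of finite injective dimension, without the Gorenstein hypothesis. Here I would rely on the canonical module and on Auslander--Buchweitz duality $\cm(R)={}^\perp\{\id<\infty\}$.
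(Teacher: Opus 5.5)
Your overall architecture matches the paper's: take the approximation $0\to Y\to X\to k\to0$ with $Y\in\X^\perp$, split on whether $\X$ contains a module of infinite projective dimension, then use Theorem \ref{inj}, Bass, and a depth formula. But three steps as written are wrong or missing.

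\textbf{The case $Y=0$, and Case 1.} Your justification that $k\in\X$ gives $\X=\mod R$ ``because every module is filtered by copies of $k$'' is false: only finite-length modules are. A resolving subcategory containing $k$ need not be $\mod R$. For example, over $R=k[[x,y]]$ the modules that are free on the punctured spectrum form a resolving subcategory containing $k$ but not $R/(x)$. The conclusion does hold here, but only because $\X$ is contravariantly finite. For arbitrary $M$, the minimal approximation $0\to Y_M\to X_M\to M\to0$ has $Y_M\in\X^\perp$. So $k\in\X$ forces $\Ext^{>0}_R(k,Y_M)=0$, i.e.\ $Y_M$ is injective. Hence $Y_M=0$ or $R$ is artinian, and in both cases $M\in\X$. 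This is \cite[Lemma 3.10]{T2011}, which the paper invokes at the outset to secure $Y\neq0$.

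Your Case 1 silently needs $Y\ne0$. If $R$ is regular and not a field, $\X=\mod R$ consists of modules of finite projective dimension but is not $\add R$. Once $Y\ne0$ is known, Case 1 needs no depth comparisons: for $0\ne X'\in\X$, one has $\pd X'=\sup\{i\mid\Ext^i_R(X',Y)\ne0\}=0$ because $Y\in\X^\perp$.

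\textbf{The canonical module in Case 2.} The existence of a canonical module is unjustified, and your argument for it is circular, since MCM approximations and FID hulls presuppose one. Deriving $\X\subseteq\cm(R)$ from $\Ext^{>0}_R(X,\omega)=0$ also assumes $\omega\in\X^\perp$, which you never establish. None of this is needed. Since $0\ne Y\in\X^\perp$ has finite injective dimension, Ischebeck's formula (see \cite[Lemma 3.12]{T2011}) gives $\depth R-\depth X'=\sup\{i\mid\Ext^i_R(X',Y)\ne0\}\le0$ for every $0\ne X'\in\X$. So $\X\subseteq\cm(R)$ directly, and your step ``a non-MCM module pushes to $k\in\X$'' becomes unnecessary.

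\textbf{The reverse inclusion.} What resolves your ``main obstacle'' is a fact you never state: a contravariantly finite resolving $\X$ satisfies $\X={}^\perp(\X^\perp)$ \cite[Lemma 3.11]{T2011}. Indeed, for $C\in{}^\perp(\X^\perp)$ the approximation $0\to Y_C\to X_C\to C\to0$ has $Y_C\in\X^\perp$, so it splits. Now apply Theorem \ref{inj} to every $0\ne N\in\X^\perp$, not just to $Y$, to get $\id N<\infty$. The same formula then gives $\Ext^{>0}_R(C,N)=0$ for all $C\in\cm(R)$, so $\cm(R)\subseteq{}^\perp(\X^\perp)=\X$. The phrase ``$C$ is built from $X$ via syzygies'' does not supply this step, and neither a canonical module nor Auslander--Buchweitz duality is required.
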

\begin{proof}
Suppose that $\X$ is a proper subcategory of $\mod R$. 
By \cite[Lemma 3.10]{T2011}, the residue field $k$ does not belong to $\X$. 
Hence, there exists an exact sequence $0\to Y\to X\to k\to0$ with $X\in\X$ and $0\neq Y\in\X^{\perp}$.
Suppose first that every module in $\X$ has finite projective dimension.
Then \cite[Proposition 3.14]{T2011} yields $\X=\add R$.
Suppose next that $\X$ contains an $R$-module $G$ of infinite projective dimension. 
Since $Y\in\X^{\perp}$, Theorem \ref{inj} shows that $Y$ has finite injective dimension. 
Hence the Bass conjecture \cite{PS1973,Rob1987} implies that $R$ is Cohen--Macaulay.
We show that $\X=\cm(R)$. 
Let $0\neq X'\in\X$. 
Since $Y\in\X^\perp$ and $\id Y<\infty$, \cite[Lemma 3.12]{T2011} gives $\depth R-\depth X'=\sup\{i\mid\Ext_{R}^{i}(X',Y)\neq0\}\le0$. 
Hence $\depth X'=\dim R$, and therefore $\X\subseteq\cm(R)$.
Conversely, let $M\in\cm(R)$.
 By \cite[Lemma 3.11]{T2011}, it is enough to show that $M\in{}^{\perp}(\X^{\perp})$. 
Let $0\neq N\in\X^{\perp}$. 
Since $G\in\X$, Theorem \ref{inj} shows that $N$ has finite injective dimension. 
Hence \cite[Lemma 3.12]{T2011} gives $\sup\{i\mid\Ext_R^i(M,N)\neq0\}=\depth R-\depth M=0$. 
Thus $M\in{}^{\perp}(\X^{\perp})$, and hence $M\in\X$. 
Therefore $\cm(R)\subseteq\X$, and consequently $\X=\cm(R)$.
\end{proof}
\section{Resolving subcategories of finite type}
In this section, we establish Theorem \ref{thm_fintype} and apply it to a conjecture of Takahashi on dominant local rings.
A subcategory $\X$ of $\mod R$ is said to be {\em of finite type} if $\add\X$ contains only finitely many isomorphism classes of indecomposable modules. 
If $R$ is Cohen--Macaulay, we say that $R$ has \emph{finite CM type} if $\cm(R)$ is of finite type.
The following facts follow from \cite[Theorem 2.2]{LW} and \cite[Proposition 2.18]{LW}, respectively. 
We use them without further mention in the rest of this paper.
We denote by $\widehat{R}$ the $\m$-adic completion of $R$, and for $M\in\mod R$ we set $\widehat{M}=M\otimes_{R}\widehat{R}$.
\begin{lem}\label{lem_compl}
The following statements hold.
\begin{enumerate}[\rm(1)]
\item 
Let $\X$ be a subcategory of $\mod R$ closed under finite direct sums and direct summands. 
Then $\X$ is of finite type if and only if $\X$ has an additive generator.
\item 
Let $M,N$ be finitely generated $R$-modules. 
Then $M\in\add N$ if and only if $\widehat{M}\in\add_{\widehat{R}}\widehat{N}$.
\end{enumerate}
\end{lem}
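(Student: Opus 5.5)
The statement splits into two independent parts, and I would take them in order.

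For (1), let $\X$ be closed under finite direct sums and direct summands. If $\X$ is of finite type, let $M_1,\dots,M_n$ be representatives of the finitely many isomorphism classes of indecomposable modules in $\add\X=\X$, and set $G=M_1\oplus\cdots\oplus M_n$. Every finitely generated module is a finite direct sum of indecomposables, since we can induct on the minimal number of generators; the Krull--Schmidt property is not needed for this. Each indecomposable summand of a module of $\X$ lies in $\X$, hence is isomorphic to some $M_j$. So every module in $\X$ lies in $\add G$, and $G\in\X$ gives $\add G=\X$. Thus $G$ is an additive generator. Conversely, suppose $\X=\add G$. I would pass to the completion. By Krull--Schmidt over $\widehat R$, write $\widehat G\cong\bigoplus N_j^{\oplus a_j}$ with $N_1,\dots,N_m$ indecomposable. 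An indecomposable $M\in\add G$ has $\widehat M\in\add_{\widehat R}\widehat G$, so $\widehat M$ is a direct sum of copies of the $N_j$. I would then bound the number of isomorphism classes of such $M$. The natural tool is the fact that $M\cong M'$ if and only if $\widehat M\cong\widehat{M'}$, which is standard (\cite[Corollary 1.15]{LW}). This reduces the problem to showing that only finitely many of the modules $\bigoplus N_j^{\oplus c_j}$ arise as $\widehat M$ with $M$ indecomposable.

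This bound is the main obstacle. My first attempt is the following. Since $M$ is a summand of $G^{\oplus t}$, we have $\widehat M\oplus\widehat{M''}\cong\widehat G^{\oplus t}$. Write $\widehat G=\bigoplus_j N_j^{a_j}$, and note that the set of realizable exponent vectors $(c_j)$ forms a submonoid of $\mathbb N^m$. Any realizable vector that decomposes as a sum of two nonzero realizable vectors comes from a decomposable $M$. So the indecomposable $M$ correspond to atoms of this monoid. Moreover, the monoid is a full affine monoid, because realizability is cut out by a subgroup condition (\cite[Theorem 2.7]{LW}, Wiegand's description). A full affine monoid has only finitely many atoms, which are the Hilbert basis elements. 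This yields finiteness. Since the paper cites \cite[Theorem 2.2]{LW}, I expect the intended route is simply to quote that theorem, and my argument reconstructs its content.

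For (2), I would quote \cite[Proposition 2.18]{LW}, which is faithful flatness plus the fact that split monomorphisms descend. For the forward direction, $M\oplus M'\cong N^{\oplus t}$ gives $\widehat M\oplus\widehat{M'}\cong\widehat N^{\oplus t}$ directly. For the converse, suppose $\widehat M$ is a summand of $\widehat N^{\oplus t}$. Consider the evaluation map $N^{\oplus t}\otimes\hom_R(N^{\oplus t},M)\to M$, or more simply the natural map $\hom_R(M,N^{\oplus t})\otimes\hom_R(N^{\oplus t},M)\to\operatorname{End}_R(M)$. Its completion is the corresponding map over $\widehat R$, and this completed map hits the identity because $\widehat M$ is a summand of $\widehat N^{\oplus t}$. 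Hom commutes with completion for finitely generated modules, and $\widehat R$ is faithfully flat, so the image of the original map is an ideal whose completion contains a unit. Hence the image contains an element $\sum\beta_i\alpha_i$ that is a unit in $\operatorname{End}_R(M)$. Consequently $M$ is a summand of $N^{\oplus ts}$ for a suitable $s$.
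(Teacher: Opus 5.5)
Your proposal is correct and follows essentially the paper's route. The paper simply quotes \cite[Theorem 2.2]{LW} for (1) and \cite[Proposition 2.18]{LW} for (2), and your completion-plus-monoid argument and your trace-ideal/faithful-flatness argument are unpackings of those same facts. The one step you outsource, fullness of the exponent monoid, is better justified by descent of direct summands along completion than by a description of extended modules: if $\widehat{M_1}$ is a summand of $\widehat{M_2}$, then $M_1$ is a summand of $M_2$. This makes $M\mapsto(c_j)$ a divisor homomorphism, so whenever $a\le b$ are realizable, $b-a$ is realizable, and the indecomposables correspond to the finitely many minimal nonzero realizable vectors by Dickson's lemma. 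In (2), state the last step as follows: the image ideal $I\subseteq\operatorname{End}_R(M)$ satisfies $I\otimes_R\widehat{R}=\operatorname{End}_{\widehat{R}}(\widehat{M})$, hence $I=\operatorname{End}_R(M)$ by faithful flatness.
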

Using the technique developed in \cite[Section 4]{CPST}, we reduce the study of resolving subcategories of finite type to the complete case.
\begin{thm}\label{thm_fintype}
Let $\X$ be a resolving subcategory of finite type in $\mod R$. Then either $\X=\add R$, or $R$ is Cohen--Macaulay of finite CM type and $\X=\cm(R)$.
\end{thm}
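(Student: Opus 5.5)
Since $\X$ is of finite type and closed under finite direct sums and direct summands, Lemma \ref{lem_compl}(1) gives an additive generator $G$, so $\X=\add G$. If $G$ is free, then $\X=\add R$ and we are done, so we assume $G$ is not free. The plan is to pass to the completion $\widehat{R}$. It is henselian, and there Corollary \ref{mainthm} is available because $\add_{\widehat R}\widehat G$ is contravariantly finite by Remark \ref{rmk_appr}(3). The conclusion then has to be descended back to $R$.

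\textbf{Step 1: the completion.} We show that $\widehat{\X}:=\add_{\widehat R}\widehat G$ is a resolving subcategory of $\mod\widehat R$, following the technique of \cite[Section 4]{CPST}. It clearly contains $\widehat R$ and is closed under direct summands and finite sums. For syzygies, note that $\syz_{\widehat R}\widehat G\cong\widehat{\syz_R G}$, and $\syz_RG\in\X=\add G$; hence $\widehat{\X}$ is closed under syzygies. For extensions, take an exact sequence $0\to \widehat G^{a}\to E\to \widehat G^{b}\to0$ (summands can be handled afterwards). Its class lies in $\Ext^1_{\widehat R}(\widehat G^{b},\widehat G^{a})\cong\Ext^1_R(G^b,G^a)\otimes_R\widehat R$. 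The standard approximation argument of \cite{CPST} then shows that $E$ is a direct summand of the completion of a middle term of some extension over $R$, possibly after adding free summands. Using Lemma \ref{lem_compl}(2), this gives $E\in\add\widehat G$. We then invoke Remark \ref{rmk_appr}(4). We expect this extension-closure step to be the main obstacle. If it is awkward, the fallback is to replace $\widehat{\X}$ by $\res_{\widehat R}\widehat G$ and verify that this is still of finite type and that every member is a summand of $\widehat{X}$ for some $X\in\X$.

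\textbf{Step 2: apply the main corollary.} Now $\widehat\X$ is a contravariantly finite resolving subcategory over the henselian ring $\widehat R$, and it contains the nonfree module $\widehat G$. By Corollary \ref{mainthm}, $\widehat\X=\mod\widehat R$ or $\widehat\X=\cm(\widehat R)$ with $\widehat R$ Cohen--Macaulay. The first case is impossible. Indeed, $\mod\widehat R$ is never of finite type unless $\widehat R$ is a field: the modules $\widehat R/\m^n$ are pairwise nonisomorphic indecomposables when $\dim\widehat R>0$, and in dimension $0$ one uses $k$ together with the finiteness of representation type (alternatively, $k\in\X$ would force $G$ generating $k$-syzygies, contradicting finite type as in \cite[Lemma 3.10]{T2011}). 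More simply, if $\widehat\X=\mod \widehat R$, then $\widehat k=k\in\add\widehat G$, so $k\in\add G=\X$. Since $\X$ is resolving, it then contains $\mod R$, which would have to be of finite type, and this forces $R$ to be artinian of finite representation type. In that case $\cm(R)=\mod R$ anyway, so the conclusion still holds. So in all cases $\widehat R$ is Cohen--Macaulay and $\add\widehat G=\cm(\widehat R)$.

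\textbf{Step 3: descent.} Since $\widehat R$ is Cohen--Macaulay, so is $R$, and depth is preserved by completion, so $\X\subseteq\cm(R)$. Conversely, for $M\in\cm(R)$ we have $\widehat M\in\cm(\widehat R)=\add\widehat G$. Lemma \ref{lem_compl}(2) then gives $M\in\add G=\X$. Hence $\X=\cm(R)$, and $\cm(R)$ is of finite type because $\X$ is, so $R$ has finite CM type.
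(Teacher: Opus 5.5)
Your route is the paper's. You write $\X=\add G$, show that $\add_{\widehat R}\widehat G$ is resolving, apply Corollary \ref{mainthm} over the henselian ring $\widehat R$, and descend with Lemma \ref{lem_compl}(2). For extension-closure you, like the paper, simply defer to the argument of \cite{CPST}. Steps 1 and 3 are fine at the level of detail the paper itself gives.

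The problem is your treatment of the case $\add_{\widehat R}\widehat G=\mod\widehat R$, where two claims are wrong.

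\textbf{The case is not impossible.} For $R=k[x]/(x^2)$ one has $\mod R=\add(R\oplus k)$. This is a resolving subcategory of finite type different from $\add R$, so the case really occurs; it is exactly the artinian case of the theorem.

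\textbf{The implication from $k\in\X$ fails.} The sentence ``Since $\X$ is resolving, it then contains $\mod R$'' assumes that a resolving subcategory containing $k$ is all of $\mod R$. That is false in general. Over $k[[x,y]]$, the modules that are free on the punctured spectrum form a resolving subcategory containing $k$. It does not contain $R/(x)$: localizing at $(x)$ gives the residue field of a DVR, which is not free.

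\textbf{The fix is one line, using a tool you already use in Step 3.} This is what the paper does. If $\add_{\widehat R}\widehat G=\mod\widehat R$, then $\widehat M\in\add_{\widehat R}\widehat G$ for every $M\in\mod R$. Hence $M\in\add G=\X$ by Lemma \ref{lem_compl}(2), so $\X=\mod R$ and $\mod R$ has finite type. Your own observation that the modules $R/\m^n$ are pairwise nonisomorphic indecomposables then forces $R$ to be artinian, and $\X=\mod R=\cm(R)$.

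If you prefer to keep the detour through $k\in\X$, use only closure under extensions. Then $\X$ contains every $R/\m^n$, finite type forces $R$ to be artinian, and every module has finite length and so lies in $\X$.

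With this sentence replaced, your proof coincides with the paper's.
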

\begin{proof}
There exists $G\in\X$ such that $\X=\add G$. 
We first show that $\add_{\widehat{R}}\widehat{G}$ is a resolving subcategory of $\mod\widehat{R}$. 
The argument in the proof of \cite[Theorem 4.3]{CPST} shows that $\add_{\widehat R}\widehat G$ is closed under extensions. 
It follows that $\add_{\widehat R}\widehat G$ contains $\widehat{R}$ and is closed under syzygies, since $\X$ is resolving.
Thus $\add_{\widehat{R}}\widehat{G}$ is resolving.
There is nothing to prove if $\X=\add R$. 
Suppose that $\add R\ne\X$.
Then $\add_{\widehat{R}}\widehat{R}\subsetneq\add_{\widehat R}\widehat G$. 
Since $\add_{\widehat{R}}\widehat{G}$ is of finite type, it is contravariantly finite in $\mod\widehat{R}$.
Hence Corollary \ref{mainthm} shows that either $\add_{\widehat{R}}\widehat{G}=\mod\widehat{R}$, or $\widehat{R}$ is Cohen--Macaulay and $\add_{\widehat{R}}\widehat{G}=\cm(\widehat{R})$.
In the former case, one has $\X=\add G=\mod R$. 
Since $\X$ is of finite type, $R$ has finite representation type, and hence $R$ is artinian by \cite[Theorem 3.3]{LW}. 
Thus $R$ is Cohen--Macaulay of finite CM type and $\X=\cm(R)$. 
In the latter case, $R$ is Cohen--Macaulay and $G\in\cm(R)$.
It follows that  $\add G=\cm(R)$, and hence $R$ has finite CM type.
\end{proof}
We next apply Theorem \ref{thm_fintype} to Cohen--Macaulay local rings of finite CM type.
We recall the notions of (uniformly) dominant local rings and the dominant index.
We denote by $\ds(R)$ the singularity category of $R$, that is, the Verdier quotient of $\db(R)$ by the thick subcategory of perfect complexes.
We use the following notation and terminology; see \cite[Definitions 3.1, 3.3, 3.4 and 3.6]{KT2026}.
\begin{dfn}\label{def_dom}
Let $\T$ be a triangulated category.
\begin{enumerate}[\rm(1)]
\item 
For subcategories $\X,\Y$ of $\mod R$, let $\X\circ\Y$ denote the subcategory of $\mod R$ consisting of modules $M$ such that there exists an exact sequence $0\to X\to M\to Y\to0$ with $X\in\X$ and $Y\in\Y$. 
We set ${\lbrack\X\rbrack}_0=0$ and ${\lbrack\X\rbrack}_1=\add\bigl(\{R\}\cup\{\syz^iX\mid X\in\X,\ i\ge0\}\bigr)$. 
For $n>1$, we set ${\lbrack\X\rbrack}_n=\add\bigl({\lbrack\X\rbrack}_{n-1}\circ{\lbrack\X\rbrack}_{1}\bigr)$.
\item 
Let $\X,\Y$ be subcategories of $\T$. 
Let $\X\ast\Y$ denote the subcategory of $\T$ consisting of objects $E$ such that there exists an exact triangle $X\to E\to Y\to X[1]$ with $X\in\X$ and $Y\in\Y$. 
We set ${\langle\X\rangle}_0^\T=0$ and ${\langle\X\rangle}_1^\T=\add\{X[i]\mid X\in\X,\ i\in\ZZ\}$. For $n>1$, we set ${\langle\X\rangle}_n^\T=\add\bigl({\langle\X\rangle}_{n-1}^\T\ast{\langle\X\rangle}_{1}^{\T}\bigr)$.
\item 
The ring $R$ is {\em dominant} if $k\in\thick_{\ds(R)}X$ for every nonzero object $X\in\ds(R)$. 
The {\em dominant index} of $R$ is defined as $\operatorname{dx}(R)=\inf\{n\in\ZZ_{\ge-1}\mid k\in{\langle X\rangle}_{n+1}^{\ds(R)}\text{ for every }0\ne X\in\ds(R)\}$. 
The ring $R$ is {\em uniformly dominant} if $\operatorname{dx}(R)<\infty$.
\end{enumerate}
\end{dfn}
For a singular local ring $R$, the {\em (resolving) core} $\core R$ is defined to be the intersection of all resolving subcategories of $\mod R$ containing a module of infinite projective dimension; see \cite[Definition 3.1(3)]{T2021b}.
Takahashi conjectured in \cite[Conjecture 4.2]{T2021b} that every singular Cohen--Macaulay local ring of finite CM type satisfies $\core R=\cm(R)$. 
This conjecture was later reformulated as \cite[Conjecture 9.1]{T2023}, which asserts that every Cohen--Macaulay local ring of finite CM type is dominant; see \cite[Proposition 6.2(2)]{T2023}. 
Several partial affirmative results have been obtained; see \cite[Proposition 6.6]{T2023}, \cite[Proposition 6.10]{KT2026}, and \cite[Theorem 1.2]{Liu2026} for instance. 
As an application of Theorem \ref{thm_fintype}, we obtain the following stronger result.
\begin{thm}\label{thm_conj}
Every Cohen--Macaulay local ring of finite CM type is uniformly dominant.
\end{thm}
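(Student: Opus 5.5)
Let $R$ be a Cohen--Macaulay local ring of finite CM type. If $R$ is regular, then $\ds(R)=0$, every nonzero object condition is vacuous, and $\operatorname{dx}(R)=-1<\infty$. So assume $R$ is singular.

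The plan has three steps. First, show that $[X]_n=\cm(R)$ for some $n$ depending only on $R$. Second, transfer this from modules to $\ds(R)$. Third, use the finiteness of CM type to get a bound that is uniform in $X$.

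\emph{Step 1.} Fix a nonzero object of $\ds(R)$. Up to shift, it is represented by a nonfree module $M\in\cm(R)$, since every object of $\ds(R)$ is isomorphic to a shift of a high syzygy, and high syzygies are maximal Cohen--Macaulay. Consider $\res M$. It is contained in $\cm(R)$, because $\cm(R)$ is resolving and contains $M$. Hence $\res M$ is of finite type, since $\cm(R)$ is, and it contains the nonfree module $M$. Theorem \ref{thm_fintype} then gives $\res M=\cm(R)$.

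Next I use the standard description of $\res M$ as the union $\bigcup_n[\{M\}]_n$ of the filtration in Definition \ref{def_dom}(1). Each $[\{M\}]_n$ is a subcategory of $\add$ of finitely many indecomposables, so it is of finite type. The chain $[\{M\}]_1\subseteq[\{M\}]_2\subseteq\cdots$ is increasing inside the finite set of indecomposables of $\cm(R)$, so it stabilizes. Therefore $\cm(R)=[\{M\}]_{n_M}$ for some $n_M$, and the residue field's syzygy $\syz^dk$, with $d=\dim R$, lies in $[\{M\}]_{n_M}$.

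\emph{Step 2.} Pass to $\ds(R)$. An exact sequence $0\to A\to B\to C\to0$ of modules gives a triangle in $\ds(R)$, and syzygies are shifts there while free modules vanish. An induction on $n$ then shows that every module in $[\{M\}]_n$ lies in ${\langle M\rangle}_n^{\ds(R)}$. Since $k\cong\syz^dk[d]$ in $\ds(R)$, we get $k\in{\langle M\rangle}_{n_M}^{\ds(R)}$.

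\emph{Step 3 (uniformity).} $M$ ranges over the nonfree objects of $\cm(R)$, up to shift, but ${\langle M\rangle}_n$ depends only on $\add M$ together with shifts. Moreover, if $M'$ is a nonfree indecomposable summand of $M$, then ${\langle M'\rangle}_n\subseteq{\langle M\rangle}_n$. So it suffices to treat the finitely many nonfree indecomposables $M_1,\dots,M_t$ of $\cm(R)$. Set $N=\max_j n_{M_j}$. Then $k\in{\langle X\rangle}_N$ for every nonzero $X\in\ds(R)$, so $\operatorname{dx}(R)\le N-1<\infty$.

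The main obstacle is Step 2 together with the compatibility of the filtrations $[\,\cdot\,]_n$ and ${\langle\cdot\rangle}_n$. In particular, I need to check that $\add$ and the operation $\circ$ on modules map correctly into $\add$ and $\ast$ in $\ds(R)$, using that a free summand becomes zero, and that $\res M=\bigcup_n[\{M\}]_n$ holds with exactly this definition. I expect these to follow from \cite{KT2026} or by routine induction.
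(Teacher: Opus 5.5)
Your proposal is correct and follows the paper's proof. Both apply Theorem \ref{thm_fintype} to $\res$ of each of the finitely many nonfree indecomposable maximal Cohen--Macaulay modules, use $\res=\bigcup_n[\,\cdot\,]_n$ to place $\syz^dk$ at a bounded level, take the maximum, and handle an arbitrary nonzero object through one of these indecomposables as a direct summand. The only difference is that you carry out the passage from $[\,\cdot\,]_n$ to ${\langle\cdot\rangle}_n^{\ds(R)}$ by hand, and you do it correctly, whereas the paper proves $\syz^dk\in[M]_n$ for every $M$ with $\pd M=\infty$ and then cites \cite[Proposition 3.8]{KT2026} to get $\operatorname{dx}(R)\le n-1$.
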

\begin{proof}
Set $d=\dim R$. 
We may assume that $R$ is singular. 
Since $R$ is of finite CM type, let $G_1,\dots,G_s$ be representatives of the isomorphism classes of nonfree indecomposable maximal Cohen--Macaulay $R$-modules, and put $G=R\oplus G_1\oplus\cdots\oplus G_s$. 
Then $\cm(R)=\add G$. 
Since $\res G_i$ is a resolving subcategory of finite type containing the nonfree module $G_i$, Theorem \ref{thm_fintype} yields $\res G_i=\cm(R)$.
It follows that $\bigcup_{m\ge0}{\lbrack G_i\rbrack}_{m}=\cm(R)$. 
Hence there exists $n_i\ge1$ such that $\syz^d k\in{\lbrack G_i\rbrack}_{n_i}$. 
Put $n=\max\{n_1,\dots,n_s\}$. 
Then $\syz^d k\in\lbrack G_i\rbrack_n$ for every $1\le i\le s$. 
Let $M$ be a finitely generated $R$-module with $\pd M=\infty$. 
Then $\syz^dM$ has a nonfree indecomposable direct summand, which is maximal Cohen--Macaulay and hence isomorphic to $G_i$ for some $i$. 
Therefore $\lbrack G_i\rbrack_n\subseteq\lbrack\syz^dM\rbrack_n\subseteq\lbrack M\rbrack_n$, and hence $\syz^d k\in\lbrack M\rbrack_n$. 
It follows from \cite[Proposition 3.8]{KT2026} that $\operatorname{dx}(R)\le n-1<\infty$. 
Thus $R$ is uniformly dominant.
\end{proof}
\begin{rem}
Alternatively, the proof of Theorem \ref{thm_conj} can be reduced to the case where $R$ is complete.
Indeed, by \cite[Theorem 1.2]{DKLO}, $R$ has finite CM type if and only if $\widehat{R}$ has finite CM type, while \cite[Corollary 6.3]{T2026} shows that $R$ is uniformly dominant if and only if $\widehat{R}$ is uniformly dominant; see also the proof of \cite[Proposition 4.4(2)]{Liu2026}.
\end{rem}
Combining Theorem \ref{thm_conj} with \cite[Proposition 6.2(2)]{T2023}, we obtain the following.
\begin{cor}
Conjecture \cite[Conjecture 9.1]{T2023} holds true, and hence so does \cite[Conjecture 4.2]{T2021b}.
\end{cor}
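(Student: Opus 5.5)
The corollary follows by combining Theorem \ref{thm_conj} with \cite[Proposition 6.2(2)]{T2023}, so the work is only to check that the hypotheses and conclusions line up.

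First I will address \cite[Conjecture 9.1]{T2023}. It asserts that every Cohen--Macaulay local ring of finite CM type is dominant. Theorem \ref{thm_conj} gives that such a ring $R$ is uniformly dominant, i.e.\ $\operatorname{dx}(R)<\infty$. I then unwind Definition \ref{def_dom}(3). If $\operatorname{dx}(R)=n<\infty$, then $k\in{\langle X\rangle}_{n+1}^{\ds(R)}$ for every nonzero $X\in\ds(R)$. Since ${\langle X\rangle}_{n+1}^{\ds(R)}\subseteq\thick_{\ds(R)}X$ (each ball is built from shifts, finite sums, summands and extensions of $X$), we get $k\in\thick_{\ds(R)}X$. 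Hence $R$ is dominant, which is exactly Conjecture 9.1. If $R$ is regular, $\ds(R)=0$ and the statement is vacuous, so no separate case is needed.

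Second, for \cite[Conjecture 4.2]{T2021b}: let $R$ be a singular Cohen--Macaulay local ring of finite CM type. Conjecture 4.2 says $\core R=\cm(R)$, and \cite[Proposition 6.2(2)]{T2023} identifies this with dominance of $R$, as the discussion before Theorem \ref{thm_conj} recalls. Applying the first step, $R$ is dominant, so $\core R=\cm(R)$.

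As a cross-check independent of the cited equivalence, the inclusion $\core R\supseteq\cm(R)$ follows directly from Theorem \ref{thm_fintype}. Let $\X$ be any resolving subcategory containing a module $M$ with $\pd M=\infty$. Then $\syz^dM$ has a nonfree maximal Cohen--Macaulay indecomposable summand $G_i$, which lies in $\X$. Since $\res G_i$ is of finite type and contains a nonfree module, Theorem \ref{thm_fintype} gives $\cm(R)=\res G_i\subseteq\X$. I do not need the reverse inclusion, since it is part of the cited equivalence.

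The only point requiring care is matching the exact formulation of \cite[Proposition 6.2(2)]{T2023}, in particular whether it is stated for singular rings only. Apart from that the argument is routine.
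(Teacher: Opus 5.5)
Your proposal is correct and matches the paper, which obtains the corollary exactly by combining Theorem \ref{thm_conj} with \cite[Proposition 6.2(2)]{T2023}; your step that uniform dominance gives dominance, via ${\langle X\rangle}_{n+1}^{\ds(R)}\subseteq\thick_{\ds(R)}X$, is the implicit link. Your cross-check is in fact a self-contained proof of \cite[Conjecture 4.2]{T2021b}, because the inclusion $\core R\subseteq\cm(R)$ you set aside is immediate: $\cm(R)$ is itself resolving and contains $\syz^d k$, which has infinite projective dimension when $R$ is singular.
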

\section{Further applications}
In this section, we give further applications of the results obtained in Sections 2 and 3.
The following corollary extends \cite[Corollary 3.16]{T2011} from Gorenstein local rings to Cohen--Macaulay local rings with canonical modules. 
It also generalizes \cite[Theorem 6.5]{T2013} by removing the assumptions that $\X$ is contained in $\cm(R)$ and that $\X$ contains a canonical module. 
\begin{cor}\label{cor_cm}
Let $R$ be a henselian Cohen--Macaulay local ring with a canonical module.
Let $\X$ be a resolving subcategory of $\mod R$.
Then $\X$ is contravariantly finite in $\mod R$ if and only if $\X=\add R$, $\X=\cm(R)$, or $\X=\mod R$.
\end{cor}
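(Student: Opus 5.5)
The ``only if'' direction is immediate from Corollary \ref{mainthm}, since $R$ is henselian. If $\X$ is contravariantly finite and resolving, then $\X=\add R$, or $\X=\mod R$, or ($R$ is Cohen--Macaulay and) $\X=\cm(R)$. So all the content lies in the converse: each of the three subcategories is contravariantly finite in $\mod R$, and each is resolving.

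For $\X=\add R$, contravariant finiteness is Remark \ref{rmk_appr}(3) with $G=R$. Concretely, a surjection $R^{\oplus n}\to M$ from a free module is a right approximation, because free modules are projective. For $\X=\mod R$, the identity $M\to M$ is a right approximation. Both subcategories are clearly resolving.

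For $\X=\cm(R)$, I will use the maximal Cohen--Macaulay approximations recalled in Definition \ref{def_appr}(6). These exist because $R$ is Cohen--Macaulay with a canonical module \cite[Theorem 11.17]{LW}. Let $0\to Y_M\to X_M\xrightarrow{f} M\to0$ be such an approximation, with $X_M\in\cm(R)$ and $\id Y_M<\infty$. I claim that $f$ is a right $\cm(R)$-approximation. Applying $\hom_R(X',-)$ for $X'\in\cm(R)$ shows it suffices to prove $\Ext_R^1(X',Y_M)=0$. This follows from \cite[Lemma 3.12]{T2011}, already used in the proof of Corollary \ref{mainthm}. That lemma gives $\sup\{i\mid\Ext_R^i(X',Y_M)\ne0\}=\depth R-\depth X'=0$ whenever $X'\ne0$ and $Y_M\ne0$ has finite injective dimension. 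The degenerate cases $X'=0$ or $Y_M=0$ are trivial. Alternatively, one can argue via the finite injective resolution of $Y_M$ by copies of the canonical module $\omega$, together with $\Ext_R^{>0}(X',\omega)=0$ for maximal Cohen--Macaulay $X'$, by dimension shifting. Finally, $\cm(R)$ is resolving by the depth lemma, so it is a contravariantly finite resolving subcategory.

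The only step requiring any care is the Ext-vanishing $\Ext^{>0}_R(\cm(R),\mathrm{FID})=0$. This is standard Auslander--Buchweitz theory, so I do not expect a genuine obstacle. The heavy lifting was done in Corollary \ref{mainthm}.
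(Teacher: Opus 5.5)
Your proposal is correct and follows the paper's proof. The forward implication is Corollary \ref{mainthm}, and the converse comes from Auslander--Buchweitz maximal Cohen--Macaulay approximations; you also spell out the vanishing $\Ext_R^{>0}(X',Y_M)=0$ via \cite[Lemma 3.12]{T2011}, which the paper leaves implicit. One small correction to your alternative argument: $Y_M$ has a finite resolution $0\to\omega^{\oplus n_r}\to\cdots\to\omega^{\oplus n_0}\to Y_M\to0$ by modules in $\add\omega$, not an injective resolution by copies of $\omega$, though the dimension shift goes through unchanged.
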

\begin{proof}
The forward implication follows from Corollary \ref{mainthm}, while the converse follows from the theory of maximal Cohen--Macaulay approximations \cite{AB1989}.
\end{proof}
\begin{rem} 
Let $R$ be a henselian Cohen--Macaulay local ring with a canonical module $\omega$. 
Combining Corollary \ref{cor_cm} with the canonical duality $\hom_R(-,\omega)\colon \cm(R)^{\mathrm{op}}\xrightarrow{\sim}\cm(R)$, we see that the covariantly finite coresolving subcategories of $\cm(R)$ are precisely $\add\omega$ and $\cm(R)$. 
Thus Corollary~\ref{cor_cm} yields a higher-dimensional commutative analogue of \cite[Corollary 0.3]{KS2003}. 
\end{rem} 
For a semidualizing $R$-module $C$, let $\G_C(R)$ denote the category of totally $C$-reflexive $R$-modules; see \cite{ATY2005} for the definitions and basic properties.
Theorem \ref{inj} yields the following semidualizing version of \cite[Theorem C]{CPST}.
\begin{cor}\label{cor_gc}
Let $C$ be a semidualizing $R$-module. 
If $k$ has a minimal right $\G_C(R)$-approximation, then either $\G_C(R)=\add R$, or $R$ is Cohen--Macaulay and $C$ is a canonical module of $R$.
\end{cor}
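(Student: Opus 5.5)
We split according to whether $\G_C(R)$ contains a module of infinite projective dimension, and in the second case we argue as in Corollary \ref{mainthm}. Recall from \cite{ATY2005} that $\G_C(R)$ is a resolving subcategory of $\mod R$ containing $C$. Moreover, every $X\in\G_C(R)$ satisfies $\Ext_R^{>0}(X,C)=0$, so $C\in\G_C(R)^{\perp}$.

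Suppose first that every module in $\G_C(R)$ has finite projective dimension. For $X\in\G_C(R)$ with $\pd X<\infty$, the vanishing $\Ext_R^{>0}(X,C)=0$ gives $\pd X=\sup\{i\mid\Ext^i_R(X,C)\neq0\}=0$. The point is that for a module of finite projective dimension $p$, one has $\Ext^p_R(X,N)\neq0$ for every nonzero $N$, since the last map of the minimal free resolution has entries in $\m$. Hence $X$ is free, and $\G_C(R)=\add R$.

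Now suppose some $G\in\G_C(R)$ has $\pd G=\infty$. Since $k$ has a minimal right $\G_C(R)$-approximation, Theorem \ref{inj} applies. As $C\in\G_C(R)^{\perp}\subseteq\G_C(R)^{\perp_{\gg0}}$, it gives $\id C<\infty$. A semidualizing module of finite injective dimension is dualizing, so $C$ is a canonical module and $R$ is Cohen--Macaulay. To see this directly: $\id C<\infty$ forces $R$ to be Cohen--Macaulay by the Bass conjecture \cite{PS1973,Rob1987}. Then $C$ is a maximal Cohen--Macaulay module with $\id C<\infty$, so $C\cong\omega^{\oplus t}$ for the canonical module $\omega$, using the standard classification over the completion together with faithful flatness. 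Finally, $\hom_R(C,C)\cong R$ forces $t=1$.

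The main obstacle is only this last identification of $C$ with $\omega$; the rest is formal given Theorem \ref{inj}. I expect to cite the standard fact from \cite{ATY2005} (or Christensen) that a semidualizing module of finite injective dimension is dualizing, rather than reprove it.
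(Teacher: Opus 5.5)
Your proof is correct and follows the paper's argument. The paper likewise takes a nonfree $G\in\G_C(R)$, which has infinite projective dimension by exactly the Nakayama argument you give using $\Ext_R^{>0}(G,C)=0$. It then applies Theorem \ref{inj} to $C\in\G_C(R)^{\perp}$ to get $\id C<\infty$, and simply cites that a semidualizing module of finite injective dimension is dualizing. If you carry out your direct verification instead, the claim that $C$ is maximal Cohen--Macaulay needs a justification; Ischebeck's formula \cite[Lemma 3.12]{T2011} supplies it, giving $\depth R-\depth C=\sup\{i\mid\Ext_R^i(C,C)\neq0\}=0$.
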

\begin{proof}
Assume that $\G_C(R)\ne\add R$, and choose a nonfree module $G\in\G_C(R)$. 
Then $\pd G=\infty$. 
Since $\G_C(R)$ is resolving and $C\in\G_C(R)^\perp$, Theorem \ref{inj} yields $\id C<\infty$.
Hence $C$ is a dualizing module, so $R$ is Cohen--Macaulay and $C$ is a canonical module of $R$.
\end{proof}
Theorem \ref{thm_fintype} also yields the following semidualizing version of \cite[Theorem B]{CPST}.
\begin{cor}\label{cor_gc_fintype}
Let $C$ be a semidualizing $R$-module. Then $\G_C(R)$ is of finite type if and only if either $\G_C(R)=\add R$, or $R$ is Cohen--Macaulay of finite CM type and $C$ is a canonical module of $R$.
\end{cor}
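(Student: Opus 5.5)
The proof splits into the two implications, and in both the main tool is Theorem \ref{thm_fintype} applied to the resolving subcategory $\G_C(R)$. We use the standard facts from \cite{ATY2005}: $\G_C(R)$ is resolving, $C\in\G_C(R)$, and $C\in\G_C(R)^\perp$.

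\emph{Forward implication.} Suppose $\G_C(R)$ is of finite type and $\G_C(R)\ne\add R$. Theorem \ref{thm_fintype} then shows that $R$ is Cohen--Macaulay of finite CM type and $\G_C(R)=\cm(R)$. It remains to show that $C$ is a canonical module. I see two routes.
\begin{itemize}
\item \emph{Via Corollary \ref{cor_gc} over $\widehat R$.} Completion preserves semidualizing modules, total reflexivity and finite type. By Lemma \ref{lem_compl}(2), $\G_{\widehat C}(\widehat R)=\add\widehat G$, which is of finite type. Hence it is contravariantly finite, and since $\widehat R$ is henselian, $k$ has a minimal right approximation by Remark \ref{rmk_appr}(7). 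Choose a nonfree $G\in\G_C(R)$; then $\widehat G$ is nonfree, so $\G_{\widehat C}(\widehat R)\ne\add\widehat R$. Corollary \ref{cor_gc} now shows that $\widehat C$ is a canonical module of $\widehat R$. Then $\id_R C=\id_{\widehat R}\widehat C<\infty$, so $C$ is a canonical module of $R$.
\item \emph{Directly over $R$.} Since $C\in\G_C(R)^\perp=\cm(R)^\perp$ and $\G_C(R)$ contains a nonfree module $G$ (so $\pd G=\infty$), we would like to apply Theorem \ref{inj}. This needs a minimal right $\cm(R)$-approximation of $k$. Such an approximation exists because $\cm(R)=\add G$ is contravariantly finite, and minimal versions exist since the endomorphism ring of the finitely generated module $\add G$ is semiperfect after completion, or by the paper's henselian reduction. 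Because of this subtlety I would rather use the completion route.
\end{itemize}

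\emph{Converse implication.} If $\G_C(R)=\add R$, it is trivially of finite type. Otherwise $R$ is Cohen--Macaulay of finite CM type and $C\cong\omega$ is a canonical module. Then $\G_\omega(R)=\cm(R)$: every maximal Cohen--Macaulay module is $\omega$-reflexive with $\Ext^{>0}(M,\omega)=0$ and $\Ext^{>0}(M^\dagger,\omega)=0$, by canonical duality. Since $\cm(R)$ is of finite type, so is $\G_C(R)$.

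\emph{Main obstacle.} The delicate step is the passage to the completion in the forward direction. One must check three things:
\begin{itemize}
\item $\widehat C$ is semidualizing;
\item $\G_{\widehat C}(\widehat R)=\add\widehat G$, i.e. every totally $\widehat C$-reflexive $\widehat R$-module lies in $\add\widehat G$;
\item $\id$ descends along completion.
\end{itemize}
The second point should follow by the \cite{CPST}-type argument used in Theorem \ref{thm_fintype}. Alternatively, it suffices to know only that $\add\widehat G$ is resolving, contained in $\G_{\widehat C}(\widehat R)$, and contains a nonfree module, and then apply Theorem \ref{inj} to $\add\widehat G$ with $\widehat C\in(\add\widehat G)^\perp$. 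This bypasses the need for equality and is the version I would actually write.
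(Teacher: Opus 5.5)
Your proof is correct, but the forward direction takes a detour. The paper gives no separate argument; it only says the statement follows from Theorem \ref{thm_fintype}, and from that theorem the derivation is one line.

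If $\G_C(R)\ne\add R$, then $R$ is Cohen--Macaulay and $\G_C(R)=\cm(R)$. So for $d=\dim R$ one has $\syz^dk\in\G_C(R)$, while $C\in\G_C(R)^\perp$. Dimension shifting gives $\Ext_R^i(k,C)\cong\Ext_R^{i-d}(\syz^dk,C)=0$ for $i>d$. Hence $\id C<\infty$, and the semidualizing module $C$ is dualizing, i.e.\ a canonical module. No approximation of $k$ is needed at all, which removes the concern in your second bullet about minimal right $\cm(R)$-approximations over a non-henselian ring. No completion is needed either.

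Your fallback route is nonetheless valid. Its steps are:
\begin{enumerate}[\rm(1)]
\item the argument in the proof of Theorem \ref{thm_fintype} makes $\add\widehat G$ resolving;
\item $\add\widehat G$ is contravariantly finite and $\widehat R$ is henselian, so $k$ has a minimal right approximation;
\item $\widehat G$ has infinite projective dimension;
\item $\widehat C\in(\add\widehat G)^\perp$ by flat base change;
\item Theorem \ref{inj} gives $\id_{\widehat R}\widehat C<\infty$;
\item $\id$ descends, since $\Ext_R^i(k,C)\otimes_R\widehat R\cong\Ext_{\widehat R}^i(k,\widehat C)$.
\end{enumerate}
What this route buys is independence from the conclusion $\G_C(R)=\cm(R)$. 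Once $C$ is canonical, $R$ is Cohen--Macaulay and $\G_C(R)=\cm(R)$ automatically, so finite CM type follows from finite type. It is really the semidualizing analogue of the proof of Theorem \ref{thm_fintype} combined with Corollary \ref{cor_gc}, rather than a corollary of Theorem \ref{thm_fintype}. The claim $\G_{\widehat C}(\widehat R)=\add\widehat G$ in your first bullet is unjustified and should be dropped, as you yourself propose.

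In the converse, finite type needs the inclusion $\G_\omega(R)\subseteq\cm(R)$. Canonical duality, as you state it, only gives the other inclusion $\cm(R)\subseteq\G_\omega(R)$. The needed inclusion holds because $\Ext_R^{>0}(M,\omega)=0$ forces $\depth M=\depth R$ for $M\ne0$ by \cite[Lemma 3.12]{T2011}. So your equality $\G_\omega(R)=\cm(R)$ is fine once both inclusions are recorded.
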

The following result gives restrictions on a resolving subcategory $\X$ when the residue field admits a right $\X$-approximation, without assuming that $\X$ is contravariantly finite.
When $R$ is Cohen--Macaulay, we denote by $\cm_0(R)$ the subcategory of $\cm(R)$ consisting of modules that are locally free on the punctured spectrum.
\begin{prop}\label{sensei_prop}
Let $R$ be a henselian local ring and $\X$ a resolving subcategory of $\mod R$. 
If $k$ has a right $\X$-approximation, then one of the following conditions holds.
\begin{enumerate}[\rm(1)]
\item 
Either $k\in\X$ or $\X=\add R$.
\item 
The ring $R$ is Cohen--Macaulay and $\X$ is contained in $\cm (R)$. If, moreover, $R$ admits a canonical module $\omega$ and $\omega\in\X$, then one has $\cm_0(R)\subseteq\X$.
\end{enumerate}
\end{prop}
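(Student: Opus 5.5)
Since $R$ is henselian and $\X$ is closed under direct summands, Remark \ref{rmk_appr}(7) turns the given right $\X$-approximation of $k$ into a minimal one. Suppose (1) fails, so $k\notin\X$ and $\X\ne\add R$. By Remark \ref{rmk_appr}(9) the minimal approximation fits into an exact sequence $0\to Y\to X\to k\to0$ with $X\in\X$ and $Y\in\X^\perp$. Because $k\notin\X$, the map $X\to k$ is not an isomorphism, so $Y\ne0$.

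The first step is to find a module of infinite projective dimension in $\X$. If every module in $\X$ had finite projective dimension, I would follow the argument of \cite[Proposition 3.14]{T2011}. That result was used in Corollary \ref{mainthm} under contravariant finiteness, but its proof should only use the minimal approximation of $k$ together with $Y\in\X^\perp$. Here is the mechanism I expect. For $X'\in\X$ with $\pd X'<\infty$, Auslander--Buchsbaum and $\Ext^{>0}_R(X',Y)=0$ force $X'$ to be free, provided $\depth Y$ can be controlled. The obstacle is the case $\depth Y$ small. I would handle it by noting that $Y\subseteq X$ and $\Ext^1_R(\X,Y)=0$, and checking that the proof of \cite[Proposition 3.14]{T2011} only uses these facts. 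The conclusion is $\X=\add R$, contradicting our assumption, so $\X$ contains some $G$ with $\pd G=\infty$.

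With such a $G$, Theorem \ref{inj} applies directly, since it only needs a minimal right $\X$-approximation of $k$ and not contravariant finiteness. It gives $\id N<\infty$ for every $N\in\X^{\perp_{\gg0}}$, in particular $\id Y<\infty$. The Bass conjecture \cite{PS1973,Rob1987} then shows that $R$ is Cohen--Macaulay. For $0\ne X'\in\X$, \cite[Lemma 3.12]{T2011} gives
\[\depth R-\depth X'=\sup\{i\mid\Ext^i_R(X',Y)\ne0\}\le0,\]
so $X'\in\cm(R)$ and hence $\X\subseteq\cm(R)$. This is exactly the first half of the proof of Corollary \ref{mainthm}.

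For the last clause, assume $\omega\in\X$ and let $M\in\cm_0(R)$. The plan is to resolve $M$ in the "co" direction by copies of $\omega$ and use closure of $\X$ under extensions and kernels of epimorphisms. Take an FID hull $0\to M\to Y^M\to X^M\to0$; alternatively, dualize a free resolution of $M^\dagger=\hom_R(M,\omega)$ to get $0\to M\to\omega^{a_0}\to\omega^{a_1}\to\cdots$. Since $\omega\in\X$ and $\X$ is closed under kernels of epimorphisms, it suffices to show that some cosyzygy $C$ in this coresolution lies in $\X$, after which we descend back to $M$. The difficulty is that $\X$ has only these closure properties.

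This is where $M$ being locally free on the punctured spectrum enters, via the finite-length module $k$. The idea is that $\Ext^i_R(M^\dagger,-)$ has finite length, so high cosyzygies of $M$ are built from $\omega$-modules and modules linked to $k$ through the sequence $0\to Y\to X\to k\to 0$. I would try to show $\Ext^1_R(C,Y')=0$ for all $Y'\in\X^\perp$, which by \cite[Lemma 3.11]{T2011} would put $C$ into $\X$; the place where that lemma needs contravariant finiteness of $\X$ must be replaced by an argument using only the approximation of $k$. Concretely, every $N\in\X^\perp$ has finite injective dimension, hence lies in $\add\omega$ up to a finite coresolution. The local freeness lets one reduce $\Ext$ vanishing to depth counting as in \cite[Lemma 3.12]{T2011}. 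I expect this final step, getting $\cm_0(R)\subseteq\X$ without full contravariant finiteness, to be the main obstacle.
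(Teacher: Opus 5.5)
Up to the inclusion $\X\subseteq\cm(R)$ your argument is the paper's: a minimal approximation $0\to Y\to X\to k\to0$ with $0\ne Y\in\X^\perp$, then Theorem \ref{inj}, the Bass conjecture and \cite[Lemma 3.12]{T2011}. One correction there: in the finite projective dimension case there is no ``small $\depth Y$'' obstacle. For $p=\pd X'<\infty$ one has $\Ext^p_R(X',N)\cong\Ext^p_R(X',R)\otimes_RN\ne0$ for every nonzero $N$; this is \cite[Lemma 3.12]{T2011}. With $Y\ne0$ and $\Ext^{>0}_R(X',Y)=0$ it gives $\pd X'=0$ directly.

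The genuine gap is the last clause, and the route you sketch cannot work. Once $\X\subseteq\cm(R)$ and $\X$ contains a module of infinite projective dimension, Theorem \ref{inj} and \cite[Lemma 3.12]{T2011} show that $\X^\perp$ is exactly the class of modules of finite injective dimension. Hence ${}^\perp(\X^\perp)=\cm(R)$. So the vanishing $\Ext^1_R(C,Y')=0$ for all $Y'\in\X^\perp$ that you plan to verify holds for every maximal Cohen--Macaulay $C$ and carries no information. Everything is hidden in the ``replacement'' for the contravariant finiteness used in \cite[Lemma 3.11]{T2011}, i.e.\ in ${}^\perp(\X^\perp)\subseteq\X$, and that would force $\X=\cm(R)$. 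This is false in general. In the Remark following this proposition, $\X=\cm_0(R)$ contains $\omega$ (as $R$ is Gorenstein on the punctured spectrum) and $k$ has a right $\X$-approximation, yet $\cm_0(R)\subsetneq\cm(R)$. An Ext-orthogonality test against $\X^\perp$ cannot detect local freeness on the punctured spectrum, so your plan reduces to the statement itself.

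The missing idea is to exploit the approximation sequence of $k$ itself, together with a generation theorem for $\cm_0(R)$. You mention the right tool (modules of finite injective dimension have finite $\add\omega$-\emph{resolutions}), but it must be applied to $Y$.
\begin{itemize}
\item If $d=\dim R=0$, then $Y$ is injective, so $0\to Y\to X\to k\to0$ splits and $k\in\X$, a contradiction.
\item So $d>0$. Since $X$ is maximal Cohen--Macaulay, the depth lemma gives $\depth Y=1$. Hence $Y$ has an $\add\omega$-resolution of length $d-1$ \cite[Exercise 3.3.28]{BH}.
\item Splicing this resolution with $0\to Y\to X\to k\to0$, and using $\omega\in\X$, gives an $\X$-resolution of $k$ of length $d$. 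Therefore $\Omega^dk\in\X$ \cite{AB1969}.
\item Local freeness enters only through $\cm_0(R)=\res\Omega^dk$ \cite[Corollary 2.6]{T2010}, which yields $\cm_0(R)\subseteq\X$.
\end{itemize}
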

\begin{proof}
We set $d=\dim R$ and assume that $k\notin\X$. 
There exists an exact sequence $(\star)$ $0\to Y\to X\to k\to0$ with $X\in\X$ and $0\neq Y\in\X^\perp$. 
If $\X=\add R$, then assertion (1) holds, so assume that $\X\neq\add R$. 
If every module in $\X$ had finite projective dimension, then \cite[Lemma 3.12]{T2011} would give $\pd X'=\sup\{i\mid\Ext_R^i(X',Y)\neq0\}=0$ for every nonzero $X'\in\X$, and hence $\X=\add R$, contrary to our assumption.
Thus $\X$ contains a module of infinite projective dimension, and Theorem \ref{inj} yields $\id Y<\infty$. 
Hence the Bass conjecture \cite{PS1973,Rob1987} implies that $R$ is Cohen--Macaulay, while \cite[Lemma 3.12]{T2011} gives $\depth R-\depth X'=\sup\{i\mid\Ext_R^i(X',Y)\neq0\}=0$ for every nonzero $X'\in\X$. 
Therefore $\X\subseteq\cm(R)$. 
Assume now that $R$ admits a canonical module $\omega$ and that $\omega\in\X$. 
If $d=0$, then $\id Y=0$, so $(\star)$ splits and $k\in\X$, a contradiction. 
Thus $d>0$. 
The module $Y$ has finite $\add\omega$-resolution dimension at most $d-1$; see \cite[Exercise 3.3.28]{BH}. 
Together with $(\star)$, an $\add\omega$-resolution of $Y$ of length at most $d-1$ gives an $\X$-resolution of $k$ of length at most $d$, and hence $\Omega^dk\in\X$; see \cite{AB1969}. 
It follows from \cite[Corollary 2.6]{T2010} that $\cm_0(R)=\res\Omega^dk\subseteq\X$.
\end{proof}
\begin{rem}
Let $R$ be a henselian Cohen--Macaulay local ring with a canonical module. 
Suppose that $R$ is locally Gorenstein on the punctured spectrum but does not have an isolated singularity. 
Then $k$ has a right $\cm_0(R)$-approximation, although $\cm_0(R)$ is not contravariantly finite in $\mod R$. 
Indeed, let $0\to Y\to X\to k\to0$ be a maximal Cohen--Macaulay approximation of $k$. 
For every $\p\ne\m$, the module $X_\p\cong Y_\p$ is maximal Cohen--Macaulay and has finite injective dimension over the Gorenstein local ring $R_\p$, and hence is free. 
Thus $X\in\cm_0(R)$, so $X\to k$ is a right $\cm_0(R)$-approximation.
On the other hand, $\add R\subsetneq\cm_0(R)\subsetneq\cm(R)$, and hence Corollary \ref{mainthm} shows that $\cm_0(R)$ is not contravariantly finite in $\mod R$.
\end{rem}
We next apply Lemma \ref{keylem} to modules that detect finite projective dimension via the vanishing of Ext.
Following \cite{CDT2014}, set $\operatorname{EP}(R)=\{M\in\mod R\mid \Ext_R^{\gg0}(X,M)=0\text{ implies }\pd_RX<\infty\text{ for every }X\in\mod R\}$.
\begin{prop}\label{prop_EP}
Let $R$ be a non-Gorenstein local ring, and $M$ a finitely generated $R$-module. 
If there exists an exact sequence $0\to k\to M\to G\to0$ such that $\Ext_R^{\gg0}(G,R)=0$, then $M\in\operatorname{EP}(R)$.
\end{prop}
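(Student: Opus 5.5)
We apply Lemma \ref{keylem}(2) to the given sequence $0\to k\to M\to G\to0$, taking $Y=M$, $X=G$ and $r=1$. Let $N\in\mod R$ satisfy $\Ext_R^{\gg0}(N,M)=0$; we want $\pd_R N<\infty$. Put $A=N$ and $B=R$. The hypotheses of Lemma \ref{keylem} then read
\[
\Ext_R^{\gg0}(A,Y)=\Ext_R^{\gg0}(N,M)=0
\qquad\text{and}\qquad
\Ext_R^{\gg0}(X,B)=\Ext_R^{\gg0}(G,R)=0 .
\]
The first vanishing is our assumption on $N$, and the second is the hypothesis of Proposition \ref{prop_EP}. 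Lemma \ref{keylem} therefore gives $\pd_R N<\infty$ or $\id_R R<\infty$. The second alternative means $R$ is Gorenstein, which is excluded, so $\pd_R N<\infty$. Since $N$ was arbitrary, $M\in\operatorname{EP}(R)$.

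The substantive check is that Lemma \ref{keylem}(2), whose proof the paper only indicates as ``the same way'', really holds with the roles arranged as above: a sequence $0\to k^{\oplus r}\to Y\to X\to0$, vanishing of $\Ext^{\gg0}(A,Y)$ and of $\Ext^{\gg0}(X,B)$. In outline, we apply $\hom_R(P,-)$ with $P$ a minimal free resolution of $A$ and assume $\pd A=\infty$. This gives a short exact sequence of complexes $0\to K\to\mathbf{Y}\to\mathbf{X}\to0$ in which $K=\hom_R(P,k^{\oplus r})$ has zero differentials.

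The boundary map $b_i\colon\mathbf{Y}^i\to Z^{i+1}\mathbf{Y}$ should again factor through $\mathbf{X}^i$. The reason is that $d_{\mathbf{Y}}^i$ kills $K^i$: its restriction to $K^i$ lands in $K^{i+1}$ via $d_K^i=0$. So $d_{\mathbf{Y}}^i$ factors through the cokernel $\mathbf{X}^i$ of $K^i\to\mathbf{Y}^i$.

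From here the telescoping argument of case (1) runs unchanged. It yields $\Ext_R^{\gg0}(Y,B)=0$. The long exact sequence for $0\to k^{\oplus r}\to Y\to X\to0$ then gives $\Ext_R^{\gg0}(k,B)=0$, hence $\id B<\infty$.
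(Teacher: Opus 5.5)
Your proof is correct and is the argument the paper intends: the paper states Proposition~\ref{prop_EP} without a written proof, and it follows by applying Lemma~\ref{keylem}(2) to $0\to k\to M\to G\to 0$ with $A=N$ and $B=R$, then ruling out $\id R<\infty$ because $R$ is non-Gorenstein. Your check of case (2) of the lemma is also correct: $d_{\mathbf{Y}}^i$ vanishes on the image of $K^i$ because $d_K^i=0$, so it factors through $\mathbf{X}^i$, and the rest of the argument for case (1) then carries over unchanged.
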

We provide an application of Proposition \ref{prop_EP}.
\begin{ex}
Let $R$ be a non-Gorenstein local ring, and let $G$ be a nonfree totally reflexive $R$-module.
Since $\Ext_R^1(G,k)\ne0$, there exists a nonsplit exact sequence $\xi:0\to k\to M_{G}\to G\to0$. 
As $\Ext_R^{>0}(G,R)=0$, Proposition \ref{prop_EP} yields $M_{G}\in\operatorname{EP}(R)$. 
Moreover, $k$ is not a direct summand of $M_{G}$.
Indeed, suppose that $k$ is a direct summand of $M_G$, and consider the composite $k\to M_G\to k$ of the injection in $\xi$ with the projection onto this direct summand. 
This map is either zero or an isomorphism. 
If it is zero, then the projection $M_G\to k$ factors through the cokernel $G$.
This yields a split epimorphism $G\to k$. 
Hence $k$ is totally reflexive, which forces $R$ to be Gorenstein.
This is a contradiction.
If it is an isomorphism, then the injection $k\to M_G$ is a split monomorphism, and hence $\xi$ splits, again a contradiction.
If, in addition, $\depth R>0$, then $M_{G}^{**}\cong G$.
This implies that pairwise nonisomorphic nonfree totally reflexive modules give rise in this way to pairwise nonisomorphic modules in $\operatorname{EP}(R)$ that have no direct summand isomorphic to $k$.
\end{ex}
\begin{rem}\label{rem_mM}
Let $R$ be a singular local ring, and let $0\ne M\in\mod R$ with $\pd M<\infty$. 
Then one has $\m M\in\operatorname{EP}(R)$ by Lemma \ref{keylem}(1). 
However, this also follows from the Ext analogue of \cite[Theorem 3.3]{IP2007}.
\end{rem}
We finally consider the corresponding test modules for finite injective dimension.
We set $\operatorname{EI}(R)=\{M\in\mod R\mid \Ext_R^{\gg0}(M,N)=0\text{ implies }\id N<\infty\text{ for every }N\in\mod R\}$.
\begin{prop}\label{prop_EI}
Assume that $R$ is a singular Cohen--Macaulay local ring with a canonical module. 
The following statements hold.
\begin{enumerate}[\rm(1)]
\item 
If $0\to Y_k\to X_k\to k\to0$ is a maximal Cohen--Macaulay approximation of $k$, then $X_k\in\operatorname{EI}(R)$.
\item 
If $0\to k\to Y^k\to X^k\to0$ is an FID hull of $k$, then $X^k\in\operatorname{EI}(R)$.
\end{enumerate}
\end{prop}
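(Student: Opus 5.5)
We apply Lemma \ref{keylem} with $A$ chosen to have infinite projective dimension; its conclusion then forces $\id N<\infty$. Fix $N\in\mod R$ with $\Ext_R^{\gg0}(X,N)=0$, where $X$ is the relevant MCM module, and take $B=N$.

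For (1), use the approximation $0\to Y_k\to X_k\to k\to0$ as the sequence in Lemma \ref{keylem}(1), with $X=X_k$ and $Y=Y_k$. Choose $A=\syz^d k$, a maximal Cohen--Macaulay module. Since $R$ is singular, $\pd A=\infty$. Since $\id Y_k<\infty$, we have $\Ext_R^{>0}(C,Y_k)=0$ for every maximal Cohen--Macaulay $C$; this is the standard vanishing behind the Auslander--Buchweitz theory, or one can invoke \cite[Lemma 3.12]{T2011}, which gives $\sup\{i\mid\Ext^i_R(A,Y_k)\ne0\}=\depth R-\depth A=0$. Hence $\Ext_R^{\gg0}(A,Y_k)=0$, and by hypothesis $\Ext_R^{\gg0}(X_k,N)=0$. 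Lemma \ref{keylem}(1) then gives $\pd A<\infty$ or $\id N<\infty$. The first is impossible, so $\id N<\infty$ and $X_k\in\operatorname{EI}(R)$.

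For (2), use the FID hull $0\to k\to Y^k\to X^k\to0$ as the sequence in Lemma \ref{keylem}(2), with $Y=Y^k$, $X=X^k$, and the same $A=\syz^dk$. Because $\id Y^k<\infty$ and $A$ is maximal Cohen--Macaulay, the same depth formula gives $\Ext_R^{>0}(A,Y^k)=0$. If $\Ext_R^{\gg0}(X^k,N)=0$, Lemma \ref{keylem}(2) again yields $\id N<\infty$.

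The only point requiring care is the vanishing $\Ext^{>0}_R(\cm(R),\text{FID})=0$ when the FID module is zero, since \cite[Lemma 3.12]{T2011} is stated for nonzero modules. If it is zero the vanishing is trivial, so this case needs no separate argument. The condition $r>0$ in Lemma \ref{keylem} is met with $r=1$. Singularity of $R$ is used exactly to guarantee that $\syz^dk$ has infinite projective dimension.
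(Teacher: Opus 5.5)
Your proof is correct and is essentially the intended argument: apply Lemma \ref{keylem}(1) or (2) to the given approximation or hull with $B=N$ and some $A$ of infinite projective dimension, which exists because $R$ is singular. The detour through $\syz^dk$ and \cite[Lemma 3.12]{T2011} is unnecessary, since $\id Y<\infty$ already gives $\Ext_R^{i}(A,Y)=0$ for all $i>\id Y$ and every $A$, so you can simply take $A=k$.
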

\begin{ac}
The authors would like to thank their supervisor Ryo Takahashi for valuable comments and helpful discussions, which have further polished up the draft of this paper. 
They also thank Toshinori Kobayashi for pointing out the connection between \cite[Theorem 3.3]{IP2007} and Remark \ref{rem_mM}.
The authors used OpenAI's ChatGPT to assist in organizing the argument of \cite[Theorem 3.4]{T2011} and in proofreading the manuscript. 
The authors independently checked all arguments and are fully responsible for the content of this paper. 
Mifune was partly supported by Grant-in-Aid for JSPS Fellows 25KJ1386.
\end{ac}


\begin{thebibliography}{99}
\bibitem{ATY2005}
{\sc T. Araya; R. Takahashi; Y. Yoshino}, Homological invariants associated to semi-dualizing bimodules, {\em J. Math. Kyoto Univ.} {\bf 45} (2005), no. 2, 287--306.
\bibitem{AB1969}
{\sc M. Auslander; M. Bridger}, Stable module theory, Memoirs of the American Mathematical Society {\bf 94}, {\it American Mathematical Society, Providence, RI}, 1969.
\bibitem{AB1989}
{\sc M. Auslander; R.-O. Buchweitz}, The homological theory of maximal Cohen--Macaulay approximations, {\em M\'em. Soc. Math. France (N.S.)} No. {\bf 38} (1989), 5--37.
\bibitem{AR1991}
{\sc M. Auslander; I. Reiten}, Applications of contravariantly finite subcategories, {\em Adv. Math.} {\bf 86} (1991), no. 1, 111--152.
\bibitem{AS1980}
{\sc M. Auslander; S. O. Smal{\o}}, Preprojective modules over Artin algebras, {\em J. Algebra} {\bf 66} (1980), no. 1, 61--122.
\bibitem{AS1981}
{\sc M. Auslander; S. O. Smal{\o}}, Almost split sequences in subcategories, {\em J. Algebra} {\bf 69} (1981), no. 2, 426--454.
\bibitem{BH}
{\sc W. Bruns; J. Herzog}, Cohen--Macaulay rings, revised edition, Cambridge Studies in Advanced Mathematics {\bf 39}, {\it Cambridge University Press, Cambridge}, 1998.
\bibitem{CDT2014}
{\sc O. Celikbas; H. Dao; R. Takahashi}, Modules that detect finite homological dimensions, {\em Kyoto J. Math.} {\bf 54} (2014), no. 2, 295--310.
\bibitem{CPST}
{\sc L. W. Christensen; G. Piepmeyer; J. Striuli; R. Takahashi}, Finite Gorenstein representation type implies simple singularity, {\em Adv. Math.} {\bf 218} (2008), no. 4, 1012--1026.
\bibitem{DKLO}
{\sc S. Dey; K. Kimura; J. Liu; Y. Otake}, On local rings of finite syzygy representation type, preprint (2025), {\tt arXiv:2507.17097v2}.
\bibitem{IP2007}
{\sc S. B. Iyengar; T. J. Puthenpurakal}, Hilbert--Samuel functions of modules over Cohen--Macaulay rings, {\em Proc. Amer. Math. Soc.} {\bf 135} (2007), no. 3, 637--648.
\bibitem{KT2026}
{\sc T. Kobayashi; R. Takahashi}, On the ubiquity of uniformly dominant local rings, preprint (2026), {\tt arXiv:2603.10810v1}.
\bibitem{KS2003}
{\sc H. Krause; \O. Solberg}, Applications of cotorsion pairs, {\em J. London Math. Soc. (2)} {\bf 68} (2003), no. 3, 631--650.
\bibitem{LW}
{\sc G. J. Leuschke; R. Wiegand}, Cohen--Macaulay representations, Math. Surveys Monogr. {\bf 181}, {\em American Mathematical Society, Providence, RI}, 2012.
\bibitem{Liu2026}
{\sc J. Liu}, On Takahashi's questions about dominant local rings, preprint (2026), {\tt arXiv:2608.14283v1}.
\bibitem{PS1973}
{\sc C. Peskine; L. Szpiro}, Dimension projective finie et cohomologie locale. Applications \`a la d\'emonstration de conjectures de M. Auslander, H. Bass et A. Grothendieck, {\em Publ. Math. Inst. Hautes \'Etudes Sci.} {\bf 42} (1973), 47--119.
\bibitem{Rob1987}
{\sc P. Roberts}, Le th\'eor\`eme d'intersection, {\em C. R. Acad. Sci. Paris S\'er. I Math.} {\bf 304} (1987), no. 7, 177--180.
\bibitem{T2005}
{\sc R. Takahashi}, On the category of modules of Gorenstein dimension zero, {\em Math. Z.} {\bf 251} (2005), no. 2, 249--256.
\bibitem{T2010}
{\sc R. Takahashi}, Classifying thick subcategories of the stable category of Cohen--Macaulay modules, {\em Adv. Math.} {\bf 225} (2010), no. 4, 2076--2116.
\bibitem{T2011}
{\sc R. Takahashi}, Contravariantly finite resolving subcategories over commutative rings, {\em Amer. J. Math.} {\bf 133} (2011), no. 2, 417--436.
\bibitem{T2013}
{\sc R. Takahashi}, Classifying resolving subcategories over a Cohen--Macaulay local ring, {\em Math. Z.} {\bf 273} (2013), no. 1, 569--587.
\bibitem{T2021b}
{\sc R. Takahashi}, Intersections of resolving subcategories and intersections of thick subcategories, {\em Eur. J. Math.} {\bf 7} (2021), no. 4, 1767--1790.
\bibitem{T2023}
{\sc R. Takahashi}, Dominant local rings and subcategory classification, {\em Int. Math. Res. Not. IMRN} (2023), no. 9, 7259--7318.
\bibitem{T2026}
{\sc R. Takahashi}, Uniformly dominant local rings and Orlov spectra of singularity categories, {\em Math. Z.} {\bf 312} (2026), no. 4, Paper No. 117, 27 pp.
\bibitem{Xu}
{\sc J. Xu}, Flat covers of modules, Lecture Notes in Mathematics {\bf 1634}, {\it Springer-Verlag, Berlin}, 1996.
\end{thebibliography}
\end{document}